\newtheorem{thm}{Theorem}[section]
\newtheorem{cor}[thm]{Corollary}
\newtheorem{lem}[thm]{Lemma}
\newtheorem{prop}[thm]{Proposition}
\theoremstyle{definition}
\newtheorem{defn}[thm]{Definition}
\theoremstyle{remark}
\numberwithin{equation}{subsection}
\newcommand{\eps}{\varepsilon}
\newcommand{\N}{\mathbb{N}}
\newcommand{\R}{\mathbb{R}}
\def\eps{\varepsilon}
\def\leq{\leqslant}
\def\geq{\geqslant}
\def\bar{\overline}
\begin{document}
\title[DYNAMICAL BEHAVIOR OF A STOCHASTIC SIRS EPIDEMIC MODEL]{DYNAMICAL BEHAVIOR OF A STOCHASTIC SIRS EPIDEMIC MODEL}

\author{N. T. Hieu $^\natural$} 
\author{N. H. Du$^\dag$} 
\author{P. Auger$^\sharp$} 
\author{N. H. Dang$^\flat$} 
\address {$^{\natural,\sharp}$ UMI 209 IRD UMMISCO, Centre IRD France Nord, 32 avenue Henri Varagnat, 93143 Bondy cedex, France}
\address {$^{\natural,\dag}$ Faculty of Mathematics, Informatics and Mechanics, Vietnam National University, 334 Nguyen Trai road, Hanoi, Vietnam}
\address{$^\natural$ Ecole doctorale Pierre Louis de sant\'e publique, Universit\'e Pierre et Marie Curie, France}
\address{$^\flat$ Department of Mathematics, Wayne State University, Detroit, MI 48202, USA.}
\email{$^\natural$: hieunguyentrong@gmail.com}
\email{$^\dag$: dunh@vnu.edu.vn}
\email{$^\sharp$: pierre.auger@ird.fr}
\email{$^\flat$: dangnh.maths@gmail.com}
\thanks{$^\dag$ corresponding author}
\subjclass{34C12, 60H10, 92D30}
\keywords{Epidemiology, SIRS model, Telegraph noise, Stationary distribution}
\thanks{$^\dag$ This second author was supported in part by  the Grant NAFOSTED,
N$_0$ {101.03-2014.58. } The fourth author was supported in part by the National Science Foundation DMS-1207667.}

\begin{abstract} 
In this paper we study the Kernack - MacKendrick model under telegraph noise. The telegraph noise switches at random between two SIRS models. We give out conditions for the persistence of the disease and the stability of a disease free equilibrium. We show that the asymptotic behavior highly depends on the value of a threshold $\lambda$ which is calculated from the intensities of switching between environmental states, the total size of the population as well as the parameters of both SIRS systems. According to  the value of $\lambda$, the system can globally tend towards an endemic case or a disease free case. The aim of this work is also to describe completely the omega-limit set of all positive solutions to the model. Moreover, the attraction of the omega-limit set and the stationary distribution of solutions will be pointed out.
\end{abstract}

\maketitle

\section{INTRODUCTION} 

The dynamics of disease spreading among a population have been investigated very widely in the frame of deterministic models e.g. \cite{BC}, \cite{VC}, \cite{LEK}, \cite{JDM}. In such deterministic models, the environment is assumed to be constant.  However, in most real situations, it is necessary to take into account random change of environmental conditions and their effects on the spread of the disease. 
For instance, the disease can be more likely to spread in wet (cold) condition rather than in dry (hot) condition or any other characteristics of the environment that may change randomly. Therefore, it is important to consider the disease dynamics under the impact of randomness of environmental conditions.
There are many papers about this topic in recent years e.g. \cite{AEL}, \cite{GGMP}, \cite{JJS}, \cite{LA}.

{\color{black} Weather conditions can have important effects on the triggering of epidemics. Cold and flu are influenced by humidity and cold temperatures. Viruses are more likely to survive in cold and dry conditions. Hard winds, rain, cold as well as  large variations of temperatures are factors that weaken the immune system.  Lack of sun also provokes a decrease of the level of D vitamin. We could also mention malaria which is influenced by rain and humidity level of air. Weather conditions have in general important effects on epidemics spread. Weather conditions change according to seasons and also to random variations. In the present model, we do not take into account periodic seasonal weather changes. Therefore, we consider an environment which is assumed to be rather constant on average all along the year. In this contribution, we only study the effects of random variations of weather conditions on the spread of epidemics. To simplify our description, we also assume that only two cases can occur, bad or good weather conditions.  Bad weather conditions corresponds to a case where the epidemics is more likely to spread and inversely for good conditions. Therefore, we consider that there exist two models associated with different parameters values corresponding respectively to bad and good weather conditions and that the system switches randomly from the one to the other. For disease models with noise, we also refer to recent contributions \cite{Br},\cite{Ho}.}

The basic simplest epidemic model that we consider is the classical SIRS model introduced by Kernack-MacKendrick of the form (see \cite{LEK} for details)
\begin{equation} \label{e1}
\begin{cases} \dot S = -aSI+cR \\ 
                          \dot I = aSI-bI \\  
                          \dot R=bI-cR,
\end{cases}
\end{equation}
where the susceptible $(S)$, infective $(I)$ and removed $(R)$ classes are three compartments of the total population $N$. Transitions between these compartments are denoted respectively by $a, b,$ and $c$. They describe the course of the transmission, recovery and loss of immunity.

In further studying the SIRS model, we note that the sum $S + I + R=N$ and it is a constant of population size. So that for convenience the removed class $(R)$ can always be eliminated. The reduction  of the equation \eqref{e1} is then
\begin{equation} \label{e2}
\begin{cases} \dot S = -aSI+c(N-S-I) \\ 
                          \dot I = aSI-bI .
\end{cases}
\end{equation}
It is easy to analyze the previous simple system (\ref{e2}) and to show that two situations can occur (see \cite{He}, \cite{LEK}, \cite{Ma}):

- If the basic reproduction number $\mathcal R_0=\frac {Na}{b}>1$ the disease spreads among the population and a positive equilibrium $(s_*, i_*)$ is globally asymptotically stable. It is therefore an endemic situation.

- If $\mathcal R_0=\frac {Na}{b}<1$ the disease is eradicated as a disease free equilibrium $(N,0)$, which is asymptotically stable. This situation is the eradication of the disease among the population.

In this work, we shall concentrate on the switching two classical Kernack and MacKendrick SIRS model, which will be chosen as the basic models for the epidemics. We shall assume that there are two environmental states in each of which the system evolves according to a deterministic differential equation and that the system switches randomly between these two states. Thus, we can suppose there is a telegraph noise affecting on the model in the form of switching between two-element set, $E=\{+,-\}$. With different states, the disease dynamics  are different. The stochastic displacement of environmental conditions provokes model to change from the system in state $+$ to the system in state $-$ and vice versa.

%In this paper, we concentrate in the equation \eqref{e2} with the effect of the random environment under telegraph noise form. Therefore, we have two systems \eqref{e2} each one corresponding to different environmental conditions, I and II, and the total system switches between those two models at random. 
Several questions naturally arise. For instance, in the case where the disease spreads in an environmental condition, while it is vanished in the other one, what will be the global and asymptotic behavior of the system? 
Using the basic reproduction number $\mathcal R_0 $ of both models and the switching intensities, can we make predictions about the asymptotic behavior of the global system, i.e., the existence of a global endemic state or a disease free state?

The paper has 5 sections. Section 2 details the model and gives some properties of the boundary equations. In Section 3, dynamic behavior of the solutions is studied and the $\omega$-limit sets are completely described for each case. It is shown that the threshold $\lambda$ which will be given later plays an important role to determine whether the disease will vanish or be persistent. {\color{black}We also prove the existence of a stationary distribution and provide some of its nice properties. In Section 4, some simulation results illustrate the behavior of the SIRS model under telegraph noise. The conclusion presents a summary of the results and some perspectives of the work. The last section is the appendix where the proofs of some theorems are given.}

\bigskip  
\section{PRELIMINARY} 
\medskip  

Let us consider a continuous-time Markov process $\xi_t, \; t \in R_+$, defined on the probability space $(\Omega, \mathcal F, \mathbb P)$,  with values in the set of two elements, say $E = \{+,-\}$. Suppose that $(\xi_t)$ has the transition intensities $+\overset{\alpha} \to - $ and $-\overset{\beta} \to + $ with $\alpha > 0, \beta >0$. The process $(\xi_t)$ has a unique stationary distribution 
$$ p =\lim_{t \to \infty}\mathbb P\{\xi_t=+\} = \frac {\beta}{\alpha +\beta};\; q=\lim_{t \to \infty}\mathbb P\{\xi_t=-\} = \frac {\alpha }{\alpha +\beta}.$$
The trajectory of $(\xi_t)$ is piecewise constant, cadlag functions. Suppose that
$$0 = \tau_0 < \tau_1 < \tau_2<...< \tau_n < ...$$
are its jump times. Put 
        $$\sigma_{1} = \tau_1 - \tau_0, \; \sigma_{2}= \tau_2 - \tau_1,...,\sigma_{n} = \tau_n - \tau_{n-1}...$$
 It is known that, if $\xi_0$ is given, $(\sigma_{n})$ is a sequence of independent random variables. Moreover, if $\xi_0 = +$ then $\sigma_{2n+1}$ has the exponential density $\alpha 1_{[0,\infty)}\exp(-\alpha t)$ and $\sigma_{2n}$ has the density $\beta 1_{[0,\infty)}\exp(-\beta t)$. Conversely, if $\xi_0 = -$ then $\sigma_{2n}$ has the exponential density $\alpha 1_{[0,\infty)}\exp(-\alpha t)$ and $\sigma_{2n+1}$ has the density $\beta 1_{[0,\infty)}\exp(-\beta t)$ (see \cite[vol. 2, pp. 217]{[Gh-Sk]}). 
Here $1_{[0,\infty)}=1$ for $t\geq 0$ ($=0$ for $t<0$).

In this paper, we consider the Kernack-MacKendrick model under the telegraph noise $\xi_t$ of the form:
\begin{equation} \label{e3}
\begin{cases} \dot S = -a(\xi_t)SI +c(\xi_t)(N-S-I)\\
   \dot I = a(\xi_t)SI - b(\xi_t)I
\end{cases},
\end{equation}
where $g: E=\{+,-\} \to {\R}_+$ for $g = a,b,c$. 
The noise $(\xi_t)$ carries out a switching between two deterministic systems
\begin{equation} \label{e4}
\begin{cases} \dot S = -a(+)SI +c(+)(N-S-I)\\
   \dot I = a(+)SI - b(+)I,
\end{cases}
\end{equation}
and
\begin{equation} \label{e5}
\begin{cases} \dot S = -a(-)SI +c(-)(N-S-I)\\
   \dot I = a(-)SI - b(-)I.
\end{cases}
\end{equation}

Since $(\xi_t)$ takes values in a two-element set $E$, if the solution of $(\ref{e3})$ satisfies equation $(\ref{e4})$ on the interval $(\tau_{n-1}, \tau_n)$, then it must satisfy equation $(\ref{e5})$ on the interval $(\tau_n,\tau_{n+1})$ and vice versa. Therefore, $(S(\tau_n),I(\tau_n))$ is the switching point, that is the terminal point of one state and simultaneously the initial condition of the other. It is known that with positive initial values, solutions to both \eqref{e4} and \eqref{e5} remain nonnegative for all $t\geq0$. Thus, any solution to \eqref{e3} starting in  int$\R_+^2$ exists for all $t\geq0$ and remain nonnegative.

It is easily verified that the systems $(\ref{e4})$ and $(\ref{e5})$ respectively have the equilibrium points
\begin{equation} \label{e6}
(s^\pm_*, i^\pm_*)=\Big(\frac{b(\pm)}{a(\pm)},\frac{c(\pm)(N-\frac{b(\pm)}{a(\pm)})}{b(\pm)+c(\pm)}\Big),
\end{equation}
and their global dynamics depend on these equilibriums. Concretely, if $i^\pm_*>0$ then these positive equilibriums are asymptotically stable, i.e., when $N>\frac{b(\pm)}{a(\pm)}$, $\lim_{t \to \infty}(S^\pm(t),I^\pm(t))$ $=(s^\pm_*, i^\pm_*)$. This is the endemic case, both susceptible and infective classes are together present. On the contrary, if $N\leq\frac{b(\pm)}{a(\pm)}$ then $\lim_{t \to \infty}(S^\pm(t),I^\pm(t))=(N,0)$ and the infective class will disappear. It is called the free case\\

%We consider two marginal equations  of system \eqref{e3}
%\begin{align}\label{e7} &\dot u = c(\xi_t)(N-u),\\
%\label{e7a} &\dot v =- b(\xi_t)v.
%\end{align}
%It is easy to see that the equation $(\ref{e7})$ has a unique solution $u=N-(N-u(0))e^{-\int_0^t c(\xi_{\bar t})d\bar t}$ tending to $N$ as $t\to \infty$ and the solution $v(t)$ of equation \eqref{e7a} exponentially decreases to 0.

\section{DYNAMICAL BEHAVIOR OF SOLUTIONS}
{\color{black} In this section, we introduce a threshold value $\lambda$ whose sign determines whether the system \eqref{e3} persistent or  the number of infective individuals goes to 0. Moreover, the asymptotic behavior of the solution is described in details.}

For any $(s_0, i_0) \in \mbox{int}\R^2_+$ with $s_0+i_0\leq N$, we denote by $(S(t, s_0, i_0, \omega), I(t, s_0, i_0, \omega))$ the solution of (\ref{e3}) satisfying the initial condition $(S(0, s_0, i_0, \omega), I(0, s_0, i_0, \omega)) = (s_0, i_0)$. For the sake of simplicity, we write $(S(t), I(t))$ for $(S(t, s_0, i_0, \omega), I(t, s_0, i_0,\omega))$ if there is no confusion. A function $f$ defined on $[0, \infty)$ is said to be ultimately bounded above (respectively, ultimately bounded below) by $m$ if $\limsup _{t \to \infty} f(t) < m$ (respectively, $\liminf_{t \to \infty} f(t) > m)$. {\color{black}We also have the following definitions for persistence and permanence
\begin{defn} \label{def1}\

1) System \eqref{e3} is said to be {\it persistent} 
if $\limsup_{t \to \infty}S(t)>0, \; \limsup_{t \to \infty}I(t)>0$ for all  solutions of \eqref{e3}. 

2) In case there exists a positive $\epsilon$ such that  
\begin{align*}\epsilon&\leq \liminf_{t \to \infty}S(t)\leq  \limsup_{t \to \infty}S(t)\leq 1/ \epsilon,\\
\epsilon&\leq \liminf_{t \to \infty}I(t)\leq  \limsup_{t \to \infty}I(t)\leq 1/ \epsilon, 
\end{align*}
we call the system \eqref{e3} is {\it permanent}.
\end{defn}}

It is easy to see that the triangle $\nabla:=\{(s,i): s\geq 0, i\geq 0; s+i\leq N\}$ is invariant for the system \eqref{e3}.  
In the future, without loss of generality, suppose that $\frac{b(+)}{a(+)}\leq \frac{b(-)}{a(-)}$. 

{\color{black} We here define the threshold value which play a key role of determining the persistence of the system \eqref{e3}.
\begin{equation} \label{e8}
\lambda=p\big(a(+)N-b(+)\big)+q\big(a(-)N-b(-)\big).
\end{equation}
In the first part of this section, we prove the following classification
\begin{thm} \label{thrm3.2}\
\begin{enumerate}
\item If $\lambda<0$ then $\lim_{t\to\infty}I(t)=0$  and $\lim_{t\to\infty}S(t)=N$.
\item If $\lambda>0$ the the system \eqref{e3} is persistent. Moreover, if $\frac{b(+)}{a(+)},\frac{b(-)}{a(-)}<N$ then the system is permanent.
\end{enumerate}
\end{thm}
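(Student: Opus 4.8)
The plan is to analyze the long-run exponential growth rate of $I(t)$ along solutions of \eqref{e3}. The key observation is that from the second equation of \eqref{e3} we have $\dot I/I = a(\xi_t)S(t)-b(\xi_t)$, so that
\[
\frac{1}{t}\log\frac{I(t)}{i_0}=\frac{1}{t}\int_0^t\big(a(\xi_s)S(s)-b(\xi_s)\big)\,ds.
\]
Since $S(s)\le N$ for all $s$ (invariance of $\nabla$), the right-hand side is bounded above by $\frac1t\int_0^t\big(a(\xi_s)N-b(\xi_s)\big)\,ds$, which by the ergodic theorem for the Markov chain $\xi_t$ converges a.s. to $p\big(a(+)N-b(+)\big)+q\big(a(-)N-b(-)\big)=\lambda$. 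Hence if $\lambda<0$, $I(t)\to0$ exponentially fast a.s. Once $I(t)\to0$, the first equation reads $\dot S = c(\xi_t)(N-S-I)-a(\xi_t)SI$, and since $I\to0$ this is a small perturbation of $\dot S = c(\xi_t)(N-S)$, which drives $S\to N$; I would make this rigorous by a comparison/squeeze argument showing $\liminf S(t)\ge N-\eps$ for every $\eps$, using that $\int_0^\infty I(s)\,ds<\infty$ (integrable tail of an exponentially decaying function) together with $S+I\le N$. This establishes part (1).

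For part (2), the mechanism is the same but now $\lambda>0$ provides a positive growth rate for $I$ whenever $S$ stays close to $N$, which prevents both $S$ and $I$ from approaching $0$. First, persistence of $S$: if $\limsup_{t\to\infty} S(t)=0$ then $S(t)\to0$, so from the first equation $\dot S\ge c(\xi_t)(N-S-I)-a(\xi_t)SI$ and the fact that $I\le N$, one sees $S$ is bounded below by a positive quantity for large $t$ — more carefully, when $S$ is small, $\dot S\approx c(\xi_t)(N-I)>0$ provided $I$ is bounded away from $N$; combined with $S+I\le N$ this is a contradiction (if $S\to0$ then eventually $\dot S>0$). So $\limsup S(t)>0$. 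Next, persistence of $I$: suppose for contradiction that $\limsup_{t\to\infty} I(t)=0$, i.e. $I(t)\to0$. Then as above $S(t)\to N$, so $a(\xi_t)S(t)-b(\xi_t)\to a(\xi_t)N-b(\xi_t)$ and by the ergodic theorem $\frac1t\int_0^t\big(a(\xi_s)S(s)-b(\xi_s)\big)\,ds\to\lambda>0$, forcing $I(t)\to\infty$, contradicting $I(t)\to0$. Hence $\limsup I(t)>0$ and the system is persistent.

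For permanence when $\tfrac{b(+)}{a(+)},\tfrac{b(-)}{a(-)}<N$: the upper bounds $S(t),I(t)\le N$ for large $t$ are automatic from invariance of $\nabla$, so only uniform positive lower bounds $\liminf S(t)\ge\eps$, $\liminf I(t)\ge\eps$ need proof. The lower bound on $S$ follows as in the persistence argument but made uniform: whenever $S(t)\le\delta$ one has $\dot S(t)\ge c_{\min}(N-\delta-N)+\cdots$ — here I would instead use that along any solution, on the set where $S$ is small $\dot S$ is bounded below by a fixed positive constant, so $S$ cannot have $\liminf$ equal to $0$; the precise estimate uses $c(\xi_t)(N-S-I)\ge c_{\min}(N-S-I)$ and a barrier argument. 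The lower bound on $I$ is the genuinely hard part and is the main obstacle: one must show that $I$ does not return arbitrarily close to $0$ infinitely often. The standard route is to establish a strengthened persistence estimate: there is a $\delta_0>0$ such that $\limsup_{t\to\infty}I(t)\ge\delta_0$ (uniform persistence in the weak sense), and then upgrade it using the Markovian structure — roughly, whenever $I$ is small, $S$ must be close to $N$ (since $S+I\le N$ and $S$ is bounded below, and the $S$-dynamics relax toward $N$ on a timescale controlled by $c$), and then the averaged drift $a(\xi_t)S-b(\xi_t)$ has positive mean $\approx\lambda$, so $I$ grows back at a definite rate over a definite time window; an excursion-theoretic or Lyapunov-function argument (e.g. tracking $\log I$ away from $-\infty$, or invoking the attractivity of the boundary equations studied in Section 2) then yields the uniform lower bound. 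I expect the delicate points to be (i) the quantitative coupling ``$I$ small $\Rightarrow$ $S$ near $N$'' with explicit time scales, and (ii) controlling the fluctuations of the ergodic average $\frac1t\int_0^t(a(\xi_s)N-b(\xi_s))\,ds$ around $\lambda$ uniformly enough to guarantee recovery of $I$ on finite windows; these are where I would expect the authors to invoke the detailed structure of the switched boundary systems and possibly the strong law with rate / large-deviation-type control for the two-state Markov chain.
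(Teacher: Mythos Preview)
Your arguments for part (1) and for persistence of $I$ in part (2) are correct. For persistence of $I$ you take a contradiction route (if $I(t)\to0$ then $S(t)\to N$, hence $\tfrac1t\log I(t)\to\lambda>0$), whereas the paper argues directly: from $\limsup_t\tfrac1t\log I(t)\le0$ it extracts the quantitative estimate
\[
\liminf_{t\to\infty}\frac1t\int_0^t I(\bar t)\,d\bar t\ \ge\ \frac{c_{\min}}{(a_{\max}N+c_{\max})a_{\max}}\,\lambda\ >0,
\]
by chaining the $\dot I/I$ and $\dot S$ equations. Your route is shorter, but the paper's time-average lower bound is reused later (existence of a stationary distribution), so it is not wasted effort. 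For the lower bound on $S$ your sketch is incomplete (you need to rule out $I$ lingering near $N$ while $S$ is small); the paper handles this geometrically by building an explicit trapezoid $ABCD\subset\{S\ge S_{\min}\}$ that is forward invariant for \emph{both} vector fields and absorbs every positive orbit.

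The real gap is in permanence. You anticipate a delicate probabilistic argument (finite-window control of the ergodic average, excursions of $\log I$), and you flag this as the main obstacle. The paper avoids all of that: the hypothesis $\tfrac{b(+)}{a(+)},\tfrac{b(-)}{a(-)}<N$ is used \emph{deterministically}. It guarantees that $\dot I>0$ in both states on the strip $\{S>\tfrac{b(-)}{a(-)}\}$, and on the complementary strip $\{S_{\min}\le S\le\tfrac{b(-)}{a(-)},\ I\le\varepsilon_0\}$ one has $\dot S>0$ and $\dot I/\dot S>-kI$ for a fixed $k>0$. The solution curve of $dI/dS=-kI$ from $(S_{\min},\varepsilon_0)$ to $(\tfrac{b(-)}{a(-)},\varepsilon_1)$, glued to the segment $\{I=\varepsilon_1,\ S\ge\tfrac{b(-)}{a(-)}\}$, bounds a compact region $G\subset ABCD$ that is forward invariant for the switched system and satisfies $G\subset\{I\ge\varepsilon_1\}$. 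Since $\limsup I(t)>\delta_1>\varepsilon_0$ (your persistence step), the orbit enters $G$ and is then trapped, giving $\liminf I(t)\ge\varepsilon_1$. No large-deviation or finite-time ergodic control is needed; permanence here is a phase-plane fact, not a stochastic one.
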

The proof of this theorem is divided into several propositions. Firstly, we have}
\begin{prop} \label{prop5} \
\begin{enumerate}
\item[a)] If $\lambda>0$ then there is a $\delta_1>0$ such that $\limsup_{t\to\infty}I(t)>\delta_1$.
\item[b)] If $\lambda<0$ then $\lim_{t\to\infty}I(t)=0$  and $\lim_{t\to\infty}S(t)=N$.
\end{enumerate}
\end{prop}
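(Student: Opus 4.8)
The plan is to exploit the logarithmic derivative of the infective size. Since $\dot I=\big(a(\xi_t)S-b(\xi_t)\big)I$ and $I(t)>0$ for all $t$, one has the identity $\frac1t\ln\frac{I(t)}{i_0}=\frac1t\int_0^t a(\xi_s)S(s)\,ds-\frac1t\int_0^t b(\xi_s)\,ds$. I would feed into this two ingredients: (i) the a priori bound $0\le S(t)\le N$, valid for all $t$ because the triangle $\nabla$ is invariant; and (ii) the strong law of large numbers for the finite‑state Markov chain $\xi_t$, namely $\frac1t\int_0^t g(\xi_s)\,ds\to p\,g(+)+q\,g(-)$ a.s. for any function $g$ on $E$ (standard, using $\tau_n\to\infty$ a.s. and the renewal structure of the $\sigma_n$ recorded in Section 2).

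For part (b), inserting $S(s)\le N$ and letting $t\to\infty$ gives $\limsup_{t\to\infty}\frac1t\ln I(t)\le N\big(p\,a(+)+q\,a(-)\big)-\big(p\,b(+)+q\,b(-)\big)=\lambda<0$ a.s., hence $I(t)\to0$ exponentially fast. To upgrade this to $S(t)\to N$, set $V=N-S\ge0$ and note $\dot V=\big(a(\xi_t)S+c(\xi_t)\big)I-c(\xi_t)V\le(\bar aN+\bar c)I-c_{\min}V$, where $\bar a,\bar c$ are the maxima and $c_{\min}>0$ the minimum of the (positive) coefficients. Given $\eps>0$, choose $T$ with $I(t)<\eps$ on $[T,\infty)$; a one‑line Gronwall/comparison estimate then yields $\limsup_{t\to\infty}V(t)\le(\bar aN+\bar c)\eps/c_{\min}$, and since $\eps$ is arbitrary and $V\ge0$ we get $V(t)\to0$.

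For part (a), I would argue by contradiction with a deterministic $\delta_1>0$ to be fixed at the end. Suppose $\limsup_{t\to\infty}I(t)\le\delta_1$; then $I(t)\le2\delta_1$ for large $t$, and the comparison argument of part (b) gives $\limsup_{t\to\infty}(N-S(t))\le 2(\bar aN+\bar c)\delta_1/c_{\min}=:C\delta_1$, so $S(s)\ge N-2C\delta_1$ for all large $s$. Putting this lower bound into the logarithmic identity and using the law of large numbers gives $\liminf_{t\to\infty}\frac1t\ln I(t)\ge(N-2C\delta_1)\big(p\,a(+)+q\,a(-)\big)-\big(p\,b(+)+q\,b(-)\big)=\lambda-2C\delta_1\big(p\,a(+)+q\,a(-)\big)$; choosing $\delta_1$ small enough that the right‑hand side exceeds $\lambda/2>0$ forces $I(t)\to\infty$, contradicting $I(t)\le N$. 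Hence $\limsup_{t\to\infty}I(t)>\delta_1$. The routine parts are the ergodic theorem and the two linear comparison estimates; the step that needs care is making the constant $C$ in part (a) depend only on the coefficients and not on the particular trajectory, so that a single deterministic $\delta_1$ works for every solution — this is precisely what turns the easy statement "$\limsup I>0$ a.s." into the uniform persistence claimed here (and is what part 2 of Theorem \ref{thrm3.2} then builds on).
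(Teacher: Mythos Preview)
Your argument is correct. Part (b) is essentially the paper's proof: both use $S\le N$ in the logarithmic identity to get exponential decay of $I$, and then a linear comparison/variation-of-constants estimate on $N-S$ to conclude $S\to N$.

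Part (a) is where your route genuinely differs. The paper does \emph{not} argue by contradiction. Instead it chains two time-averaged inequalities: from $\limsup_{t\to\infty}\frac1t\ln I(t)\le0$ and the ergodic theorem it extracts
\[
\liminf_{t\to\infty}\frac1t\int_0^t a_{\max}\big(N-S(s)\big)\,ds\ \ge\ \lambda,
\]
and then, averaging the $\dot S$ equation (which links the time average of $N-S$ to that of $I$), it obtains the quantitative Ces\`aro bound
\[
\liminf_{t\to\infty}\frac1t\int_0^t I(s)\,ds\ \ge\ \frac{c_{\min}}{(a_{\max}N+c_{\max})\,a_{\max}}\,\lambda\ =:\ \rho>0,
\]
from which $\limsup I>\delta_1$ follows. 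Your contradiction argument via a pointwise Gronwall bound on $N-S$ is arguably more direct and gives the same conclusion with a deterministic $\delta_1$ (your constant $C$ indeed depends only on the coefficients, so the concern you flag is already handled). What you lose is the Ces\`aro lower bound $\rho$: the paper reuses exactly this inequality in the proof of Theorem~\ref{thm3} (existence of a stationary distribution), where it is converted via Fatou into a uniform lower bound on $\frac1t\int_0^t\mathbb P\{I(s)\ge\rho/2\}\,ds$. So the paper's approach is doing double duty, while yours would require a separate argument later.
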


\begin{proof}
a) The second equation of the system (\ref{e3}) follows
$$\frac{\ln I(t) - \ln I(0)} t= \frac 1 t \int_0^t (a(\xi_{\bar t})S(\bar t)-b(\xi_{\bar t}))d\bar t.$$
Since $I(t)\leq N$,  $\limsup_{t\to\infty}\frac{\ln I(t) - \ln I(0)} t\leq0$. Therefore, 
 
\begin{multline*}
\limsup_{t\to\infty} \Big( \frac 1 t \int_0^t (a(\xi_{\bar t})N-b(\xi_{\bar t}))d\bar t - \frac 1 t \int_0^t a(\xi_{\bar t})(N-S(\bar t))d\bar t\Big)=\\
\limsup_{t\to\infty} \frac 1 t \int_0^t (a(\xi_{\bar t})S(\bar t)-b(\xi_{\bar t}))d\bar t \leq0.
\end{multline*}
Thus, $$\liminf_{t\to\infty} \frac 1 t \int_0^t (a(\xi_{\bar t})N-b(\xi_{\bar t}))d\bar t \leq \liminf_{t\to\infty} \frac 1 t \int_0^t a(\xi_{\bar t})(N-S(\bar t))d\bar t.$$
Because the process $(\xi_t)$ has a unique stationary distribution 
$\lim_{t \to \infty}\mathbb P\{\xi_t=+\} = p$ and $\lim_{t \to \infty}\mathbb P\{\xi_t=-\} = q$, then by the law of large numbers
$$\lim_{t\to\infty} \frac 1 t \int_0^t (a(\xi_{\bar t})N-b(\xi_{\bar t}))d\bar t=p\big(a(+)N-b(+)\big)+q\big(a(-)N-b(-)\big)=\lambda.$$
Denote $g_{min} = \min(g(+), g(-)), \; g_{max} = \max(g(+), g(-))$ for $g = a, b, c$.
We have
\begin{align} \label{e9}
\begin{split}
\liminf_{t\to\infty} \frac 1 t \int_0^t a_{\max}(N-S(\bar t))d\bar t &\geq\liminf_{t\to\infty} \frac 1 t \int_0^t a(\xi_{\bar t})(N-S(\bar t))d\bar t\\
&\geq\liminf_{t\to\infty} \frac 1 t \int_0^t a(\xi_{\bar t})(N-b(\xi_{\bar t}))d\bar t=\lambda.
\end{split}
\end{align}
On the other hand, from
 $$\dot S(t)= -(a(\xi_t)S(t)+c(\xi_t))I(t)+c(\xi_t)(N-S(t))\geq -(a(\xi_t)N+c(\xi_t))I(t)+c(\xi_t)(N-S(t)),$$
it follows
$$\frac {S(t)-S(0)}t \geq \frac 1t \int_0^t -(a(\xi_{\bar t})N+c(\xi_{\bar t}))I(\bar t)d\bar t + \frac 1t \int_0^t c(\xi_{\bar t})(N-S(\bar t))d\bar t.$$
Since $\lim_{t\to\infty}\frac {S(t)-S(0)}t=0$, 
$$\limsup_{t\to\infty}\Big(\frac 1t \int_0^t -(a(\xi_{\bar t})N+c(\xi_{\bar t}))I(\bar t)d\bar t + \frac 1t \int_0^t c(\xi_{\bar t})(N-S(\bar t))d\bar t\Big)\leq0.$$
Hence,
\begin{align} \label{e10}
\begin{split}
\liminf_{t\to\infty}\frac 1t \int_0^t (a_{\max}N+c_{\max})I(\bar t)d\bar t\geq\liminf_{t\to\infty}\frac 1t \int_0^t (a(\xi_{\bar t})N+c(\xi_{\bar t}))I(\bar t)d\bar t\\
\geq \liminf_{t\to\infty}\frac 1t \int_0^t c(\xi_{\bar t})(N-S(\bar t))d\bar t \geq \liminf_{t\to\infty}\frac 1t \int_0^t c_{\min}(N-S(\bar t))d\bar t.
\end{split}
\end{align} 
Combining (\ref{e9}) and (\ref{e10}), we obtain
\begin{align*}
\begin{split}
\liminf_{t\to\infty}\frac 1t \int_0^t I(\bar t)d\bar t\geq\liminf_{t\to\infty}\frac 1t \int_0^t \frac {c_{\min}}{a_{\max}N+c_{\max}}(N-S(\bar t))d\bar t\\
\geq \frac {c_{\min}}{(a_{\max}N+c_{\max})a_{\max}}\lambda >0.
\end{split}
\end{align*}
This inequality implies that there exists $\delta_1>0$ such that $\limsup_{t\to\infty}I(t)>\delta_1$.\\

b) From the inequality
$$\frac{\dot I(t)}{I(t)}=a(\xi_t)S(t)-b(\xi_t)\leq a(\xi_t)N-b(\xi_t),$$
we have
$$\limsup_{t\to\infty}\frac{\ln I(t) - \ln I(0)} t \leq \limsup_{t\to\infty}\frac 1 t \int_0^t (a(\xi_{\bar t})S(\bar t)-b(\xi_{\bar t}))d\bar t\leq\lambda<0,$$
which implies that $\lim\limits_{t\to\infty}I(t)=0.$
 On the other hand,
 $$\dot S(t)= -a(\xi_t)S(t)I(t)+c(\xi_t)(N-S(t)-I(t))\geq -a_{max}NI(t)+c_{\min}(N-S(t)-I(t)).$$
Thus,
$$S(t) \geq \int_0^t {e^{-c_{\min}(t-\bar t)}(-a_{\max} N  +c_{\min})I(\bar t)d\bar t} +S (0) e^{-c_{\min}t} +c_{\min} N \int_0^t {e^{-c_{\min}(t-\bar t )}d\bar t}. $$
We see that $\lim\limits_{t\to\infty}\int_0^t {e^{-c_{\min}(t-\bar t )}d\bar t}=\frac 1{c_{\min}}$. Further, 
%If  $\lim_{t\to\infty}\int_0^t {e^{c_{\min}\bar t}(-a_{\max} N  +c_{\min})I(\bar t)d\bar t}<\infty$ then $\lim_{t\to\infty}\int_0^t {e^{-c_{\min}(t-\bar t)}(-a_{\max} N  +c_{\min})I(\bar t)d\bar t}=0.$ Otherwise, 
by paying attention that  $\lim\limits_{t\to\infty} I(t) = 0$  we also obtain 
$$\lim_{t\to\infty}\int_0^t {e^{-c_{\min}(t-\bar t)}(-a_{\max} N  +c_{\min})I(\bar t)d\bar t}=0.$$
Hence, $\liminf_{t\to\infty} S(t)\geq N$. Combining $S(t)\leq N$ for all $t>0$ gets $\lim_{t\to\infty} S(t)= N.$
The proof is complete.
\end{proof}

By  definition of $\lambda$ in \eqref{e8} we have the following corollary.
\begin{cor} \label{cor6}
If $\frac{b(+)}{a(+)} \geq N $ then $\lim_{t\to\infty}I(t)=0$ and $\lim_{t\to\infty}S(t)=N$.
\end{cor}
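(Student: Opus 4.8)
The plan is to read the sign of $\lambda$ off the hypothesis and then invoke Proposition~\ref{prop5}. By the standing normalization $\frac{b(+)}{a(+)}\le\frac{b(-)}{a(-)}$, the assumption $\frac{b(+)}{a(+)}\ge N$ gives $N\le\frac{b(+)}{a(+)}\le\frac{b(-)}{a(-)}$, hence $a(+)N-b(+)\le 0$ and $a(-)N-b(-)\le 0$; since $p,q>0$, the definition \eqref{e8} then yields $\lambda\le 0$, and $\lambda=0$ can occur only when $a(+)N=b(+)$ and $a(-)N=b(-)$ simultaneously. When $\lambda<0$, Proposition~\ref{prop5}(b) applies verbatim and immediately gives $\lim_{t\to\infty}I(t)=0$ and $\lim_{t\to\infty}S(t)=N$, so nothing more is needed in that case.

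What remains is the degenerate case $\lambda=0$, i.e. $b(\pm)=a(\pm)N$, which Proposition~\ref{prop5} does not cover. Here I would argue directly: the $I$-equation of \eqref{e3} becomes $\dot I(t)=\big(a(\xi_t)S(t)-b(\xi_t)\big)I(t)=-a(\xi_t)\big(N-S(t)\big)I(t)\le 0$ because $S(t)\le N$ on the invariant triangle $\nabla$, so $I(t)$ is nonincreasing and converges to some $I_\infty\ge 0$. To rule out $I_\infty>0$, introduce the removed class $W(t):=N-S(t)-I(t)\ge 0$; from \eqref{e3} one computes $\dot W=b(\xi_t)I(t)-c(\xi_t)W(t)\ge b_{\min}I_\infty-c_{\max}W(t)$, and a comparison with the scalar equation $\dot w=b_{\min}I_\infty-c_{\max}w$ gives $\liminf_{t\to\infty}W(t)\ge b_{\min}I_\infty/c_{\max}=:w_0>0$. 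Then $N-S(t)=W(t)+I(t)\ge w_0$ for all large $t$, so $\dot I/I=-a(\xi_t)(N-S(t))\le -a_{\min}w_0<0$ eventually, forcing $\ln I(t)\to-\infty$ and contradicting $I_\infty>0$. Hence $\lim_{t\to\infty}I(t)=0$.

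Finally, once $I(t)\to 0$ is established, I would deduce $S(t)\to N$ exactly as in the closing lines of the proof of Proposition~\ref{prop5}(b): from $\dot S\ge -\big(a_{\max}N+c_{\max}\big)I(t)+c_{\min}\big(N-S(t)\big)$, multiplication by the integrating factor $e^{c_{\min}t}$ together with $I(t)\to 0$ gives $\liminf_{t\to\infty}S(t)\ge N$, which with $S(t)\le N$ yields $\lim_{t\to\infty}S(t)=N$. The only genuine obstacle is the boundary case $\lambda=0$: there $I$ decays with no uniform exponential rate, so the logarithmic-derivative argument of Proposition~\ref{prop5}(b) no longer works by itself, and it is the auxiliary variable $W$ (the removed population) together with the comparison estimate excluding $I_\infty>0$ that closes the gap; the sub-case $\lambda<0$ is a one-line consequence of Proposition~\ref{prop5}.
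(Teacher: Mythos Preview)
Your argument is correct. For the generic situation $\lambda<0$ you do exactly what the paper does: the paper's entire justification for Corollary~\ref{cor6} is the single sentence ``By definition of $\lambda$ in \eqref{e8} we have the following corollary,'' i.e.\ one observes that the hypothesis $\frac{b(+)}{a(+)}\ge N$ together with the standing convention $\frac{b(+)}{a(+)}\le\frac{b(-)}{a(-)}$ forces both summands in \eqref{e8} to be nonpositive, and then Proposition~\ref{prop5}(b) is invoked.

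Where you differ from the paper is in the treatment of the boundary case $\lambda=0$ (equivalently $a(\pm)N=b(\pm)$). The paper does not address this case separately; its one-line proof implicitly reads the hypothesis as giving $\lambda<0$, so strictly speaking the corollary as stated in the paper has a small gap at $\lambda=0$. You close that gap with a clean direct argument: monotonicity of $I$ from $\dot I=-a(\xi_t)(N-S)I\le 0$, then the auxiliary variable $W=N-S-I$ and the comparison $\dot W\ge b_{\min}I_\infty-c_{\max}W$ to force $\liminf W>0$, which in turn yields a uniform negative upper bound on $\dot I/I$ and the desired contradiction. The final passage from $I(t)\to 0$ to $S(t)\to N$ is the same integrating-factor estimate used at the end of Proposition~\ref{prop5}(b). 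So your proof is essentially the paper's argument for $\lambda<0$, supplemented by an original and correct treatment of the degenerate case $\lambda=0$ that the paper omits.
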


In view of Corollary \ref{cor6}, in the following we suppose that $\frac{b(+)}{a(+)} < N$.
\begin{prop} \label{prop3}
$S(t)$ is ultimately bounded below by $S_{\min}>0$ and there is an invariant set for the system (\ref{e3}), which absorbs all positive solutions. 
\end{prop}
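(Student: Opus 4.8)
The plan is to exploit the structure of the $S$-equation directly. Rewrite the first equation of \eqref{e3} as
\[
\dot S(t) = -\big(a(\xi_t)S(t)+c(\xi_t)\big)I(t) + c(\xi_t)\big(N-S(t)\big),
\]
and note that since every solution lies in the invariant triangle $\nabla$, we have $0\le S(t)\le N$ and $0\le I(t)\le N$ for all $t\ge 0$. Using these a priori bounds together with the definitions $a_{\max},b_{\max},c_{\max},c_{\min}$, the drift can be bounded from below by
\[
\dot S(t) \ge -\big(a_{\max}N+c_{\max}\big)\,I(t) + c_{\min}\big(N-S(t)\big).
\]
First I would show that $I(t)$ cannot stay bounded away from $0$ forever while $S$ stays small: more precisely, I want to establish that $\liminf_{t\to\infty}S(t)\ge S_{\min}$ for an explicit $S_{\min}>0$. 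The natural route is to argue by contradiction using the averaged estimates already developed in the proof of Proposition~\ref{prop5}. From \eqref{e10} we have a lower bound on the time-average of $I$ in terms of the time-average of $N-S$, and from the $I$-equation combined with $I(t)\le N$ (so that $\limsup t^{-1}(\ln I(t)-\ln I(0))\le 0$) one gets, via the law of large numbers applied to $t^{-1}\int_0^t(a(\xi_{\bar t})S(\bar t)-b(\xi_{\bar t}))d\bar t$, an upper bound on the time-average of $a(\xi_{\bar t})(N-S(\bar t))$. Playing these off against each other forces $\liminf_{t\to\infty}\frac1t\int_0^t (N-S(\bar t))d\bar t$ to be small; but $N-S(t)$ cannot have small time-average while $S(t)$ repeatedly returns near $0$, because the differential inequality above shows $S$ recovers at a definite exponential rate $c_{\min}$ once $I$ is small. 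Quantifying this gives the explicit $S_{\min}$.

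Alternatively — and this is probably the cleaner route — I would avoid the averaging argument and instead use a direct comparison/Gronwall argument on the ODE for $S$ over the random switching intervals. Fix a small $\eta>0$; one shows there is $T_\eta$ such that whenever $S(t_0)<\eta$ for some $t_0\ge T_\eta$, the integral form
\[
S(t)\ge S(t_0)e^{-c_{\min}(t-t_0)} + \int_{t_0}^t e^{-c_{\min}(t-\bar t)}\Big(c_{\min}N-(a_{\max}N+c_{\max})I(\bar t)\Big)d\bar t
\]
drives $S$ upward as soon as $I$ drops, and since the excursions of $I$ above $\delta_1$ (guaranteed by Proposition~\ref{prop5}(a)) are interspersed with intervals on which $I$ is small, $S$ is replenished. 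Careful bookkeeping of the competition between the $c_{\min}N$ replenishment term and the $(a_{\max}N+c_{\max})I$ depletion term over a full cycle yields a uniform positive lower bound $S_{\min}$ for $\liminf_{t\to\infty}S(t)$. Either way, the output is: for every solution starting in $\mathrm{int}\,\R_+^2\cap\nabla$, eventually $S(t)>S_{\min}$.

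Finally, to produce the absorbing invariant set, I would combine $S_{\min}\le\liminf S(t)$ with the already-available bounds $S(t)+I(t)\le N$ and $S(t),I(t)\ge 0$, and set
\[
\Gamma := \{(s,i): s\ge S_{\min},\ i\ge 0,\ s+i\le N\}
\]
(intersected if necessary with $\{i\le N - S_{\min}\}$). Invariance of $\Gamma$ under \eqref{e3} follows from the flow-invariance of $\nabla$ together with the fact, read off the same differential inequality, that $\dot S>0$ on the face $\{s=S_{\min}\}$ for a slightly smaller threshold — so trajectories cannot leave through that face. The absorption claim is exactly the statement $\liminf_{t\to\infty}\mathrm{dist}((S(t),I(t)),\Gamma)=0$, which is immediate once we know $S(t)>S_{\min}$ ultimately. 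The main obstacle is the first step: extracting an \emph{explicit, solution-independent} $S_{\min}$ from the interplay between the $I$-persistence estimate and the $S$-recovery rate; this requires handling the random switching times carefully (using that the $\sigma_n$ have exponential laws, as recalled in Section~2) so that the "time spent with $I$ large" does not accidentally align with "time spent with $S$ small" in an adversarial way. Everything after that is routine.
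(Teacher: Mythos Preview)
There is a genuine gap, and the proposed invariant set is in fact \emph{not} invariant. On the face $\{s=S_{\min}\}$ of your region $\Gamma$, the $S$-equation reads
\[
\dot S = -a(\xi_t)S_{\min}\,i + c(\xi_t)\big(N-S_{\min}-i\big),
\]
and near the top corner $i=N-S_{\min}$ this equals $-a(\xi_t)S_{\min}(N-S_{\min})<0$, regardless of how small $S_{\min}$ is chosen. So trajectories \emph{can} exit through the upper part of that face; ``a slightly smaller threshold'' does not repair this. The averaging route you sketch also has problems: it produces bounds on time averages of $N-S$, not on $\liminf_{t\to\infty}S(t)$, and your appeal to Proposition~\ref{prop5}(a) for the $\delta_1$-excursions of $I$ requires $\lambda>0$, which is not a hypothesis of Proposition~\ref{prop3} (only $\frac{b(+)}{a(+)}<N$ is in force at that point). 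Finally, no stochastic bookkeeping with the exponential laws of the $\sigma_n$ is needed at all.

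The paper's argument is purely deterministic and geometric, and the key idea is precisely to \emph{truncate the top} of the strip so that the bad part of the face $\{s=S_{\min}\}$ is removed. One chooses $S_{\min}$ so small that
\[
-Na(\pm)S_{\min}+c(\pm)\Big(\tfrac{b(+)}{2a(+)}-S_{\min}\Big)>m>0,
\]
and takes the quadrangle $ABCD$ with $A=(S_{\min},0)$, $B=(S_{\min},N-\tfrac{b(+)}{2a(+)})$, $C=(\tfrac{b(+)}{2a(+)},N-\tfrac{b(+)}{2a(+)})$, $D=(N,0)$. On the left edge $AB$ one has $i\le N-\tfrac{b(+)}{2a(+)}$, hence $N-S_{\min}-i\ge\tfrac{b(+)}{2a(+)}-S_{\min}$ and $\dot S>m>0$; on (and above) the top edge $BC$ one has $s\le\tfrac{b(+)}{2a(+)}<\tfrac{b(\pm)}{a(\pm)}$, hence $\dot I<0$. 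This makes $ABCD$ forward invariant for both states simultaneously, and any positive solution in $\nabla$ enters it in finite time (first $\dot I<0$ drives it below $BC$, then $\dot S>m$ pushes it across $AB$). The missing ingredient in your proposal is this horizontal cut at $i=N-\tfrac{b(+)}{2a(+)}$, which is exactly what allows $\dot S>0$ to hold on the \emph{entire} left boundary of the absorbing set.
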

\begin{proof}
Let $S_{\min}$ be chosen such that  
\begin{equation}\label{e7b}
-N a(\pm)S_{\min} +c(\pm)\Big(\frac{b(+)}{2a(+)}-S_{\min}\Big)>m>0,
\end{equation}
and let $A=(S_{\min},0), B=(S_{\min},N-\frac{b(+)}{2a(+)}), C=(\frac{b(+)}{2a(+)},N-\frac{b(+)}{2a(+)}), D=(N,0)$. 
In the interior of the triangle $\nabla$ we have 
$\dot I(t)=a(\xi_t)(S(t)-\frac{b(\xi_t)}{a(\xi_t)})I(t)\leq a(\xi_t)(S(t)-\frac{b(+)}{a(+)})I(t)\leq a(\xi_t)(\frac{b(+)}{2a(+)}-\frac{b(+)}{a(+)})I(t) = -a(\xi_t)\frac{b(+)}{2a(+)}I(t)<0$ for all points lying  above the line $BC$, whereas  $\dot S>m$ for all points that are below the line $BC$ and on the left of $AB$ by \eqref{e7b}. Therefore, it is easy to see that the the quadrangle $ABCD$ is invariant under system \eqref{e3} and all positive solutions ultimately go there. 
\end{proof}
%\begin{center}
%\vskip -0.5cm
%\begin{figure}[h] 
%\includegraphics[width=6cm]{invariant_set_new}
%\vskip -0.5cm
%\caption{\label{set} An example of invariant set.
%The invariant set is defined by 4 dash dot lines.}
%\end{figure}
%\end{center}

\begin{cor} \label{cor60}
If $\lambda>0$ then the system (\ref{e3}) is persistent.
\end{cor}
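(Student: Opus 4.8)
The plan is to obtain the corollary as an immediate combination of Proposition \ref{prop5}(a) and Proposition \ref{prop3}, after checking that the hypotheses of the latter are available. Recall that persistence in the sense of Definition \ref{def1}(1) requires exactly two things for every positive solution: $\limsup_{t\to\infty}S(t)>0$ and $\limsup_{t\to\infty}I(t)>0$. I would establish each of these separately.

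First I would dispose of a bookkeeping point: Proposition \ref{prop3} was proved under the standing reduction $\frac{b(+)}{a(+)}<N$ coming from Corollary \ref{cor6}, so I need to know this reduction is legitimate whenever $\lambda>0$. This is automatic: if $\frac{b(+)}{a(+)}\ge N$, then by the convention $\frac{b(+)}{a(+)}\le\frac{b(-)}{a(-)}$ we also have $\frac{b(-)}{a(-)}\ge N$, hence $a(+)N-b(+)\le 0$ and $a(-)N-b(-)\le 0$, which by \eqref{e8} forces $\lambda=p\big(a(+)N-b(+)\big)+q\big(a(-)N-b(-)\big)\le 0$, contradicting $\lambda>0$. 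So under $\lambda>0$ we may invoke Proposition \ref{prop3}.

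With that in hand the argument is one line each way. Proposition \ref{prop3} gives $\liminf_{t\to\infty}S(t)\ge S_{\min}>0$, so in particular $\limsup_{t\to\infty}S(t)\ge S_{\min}>0$ for every positive solution. Proposition \ref{prop5}(a) supplies a constant $\delta_1>0$ with $\limsup_{t\to\infty}I(t)>\delta_1>0$. Both requirements of Definition \ref{def1}(1) are met, so the system \eqref{e3} is persistent.

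There is essentially no obstacle here; the only thing that needs care is the verification in the previous paragraph that $\lambda>0$ entails $\frac{b(+)}{a(+)}<N$, ensuring the invariant absorbing quadrangle $ABCD$ of Proposition \ref{prop3} is genuinely available. Everything else is a direct citation of the two already-proved propositions, which is why this is recorded as a corollary rather than a theorem.
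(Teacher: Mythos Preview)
Your proof is correct and follows exactly the paper's approach, which simply cites Propositions~\ref{prop5} and~\ref{prop3}. The only addition you make is the explicit check that $\lambda>0$ forces $\frac{b(+)}{a(+)}<N$, which the paper handles implicitly via the standing assumption introduced after Corollary~\ref{cor6}; this is a harmless (indeed clarifying) elaboration.
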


\begin{proof}
This result follows immediately from  Propositions \ref{prop5} and \ref{prop3}.
\end{proof}

\begin{prop} \label{prop61}   
$I(t)$ is ultimately bounded below by $I_{\min}>0$ if $\frac{b(-)}{a(-)}<N$. As a result, the system \eqref{e3} is permanent.
\end{prop}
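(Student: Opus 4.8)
The plan is to exploit the assumption $\frac{b(-)}{a(-)}<N$ (together with the standing assumption $\frac{b(+)}{a(+)}<N$, so that \emph{both} thresholds lie below $N$) to build, as in Proposition \ref{prop3}, a second invariant quadrangle that stays away from the axis $\{I=0\}$ and absorbs all positive solutions. The key geometric point is that once $S$ is not too large, the $I$-component cannot decay: from the second equation $\dot I = a(\xi_t)\big(S-\frac{b(\xi_t)}{a(\xi_t)}\big)I$, on the region where $S \le \frac{b(+)}{a(+)}$ one has $S - \frac{b(\xi_t)}{a(\xi_t)} \le 0$ always, but on the region where $S$ is \emph{small} — say $S \le \frac{b(+)}{2a(+)}$ — the product $a(\xi_t)(S-\frac{b(\xi_t)}{a(\xi_t)})$ is bounded \emph{above by a strictly negative constant} $-\eta$, so $\dot I \le -\eta I < 0$ there; meanwhile $\dot S > 0$ on that strip by \eqref{e7b}, pushing trajectories to the right. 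Conversely, I will choose a small horizontal level $I = I_{\min} > 0$ low enough that, on the segment where $S$ ranges over $[S_{\min}, \frac{b(-)}{a(-)}]$ (note $\frac{b(-)}{a(-)}<N$ is exactly what makes this segment have positive length and sit strictly inside $\nabla$), the $S$-dynamics still satisfy $\dot S > 0$, and simultaneously, for $S$ near or above $\frac{b(-)}{a(-)}$, the inflow term in $\dot I$ dominates so that $\dot I > 0$ on the lower edge; this prevents $I$ from ever dropping below $I_{\min}$ once the solution has entered the region.

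Concretely, the steps are: (i) fix $S_{\min}$ and $m$ as in Proposition \ref{prop3}, so the vertical strip $S \ge S_{\min}$ is eventually entered; (ii) using $\frac{b(-)}{a(-)}<N$, pick a height $h>0$ with $\frac{b(-)}{a(-)}+\text{(something)}<N$ and choose $I_{\min}>0$ small enough that along the horizontal segment at height $I_{\min}$, for $S$ in the relevant range, the vector field points upward or inward — quantitatively, for $S \le \frac{b(-)}{a(-)}$ one checks $\dot S>0$ keeps the flow from exiting on the left, and for $S$ close to the endemic zone the term $c(\xi_t)(N-S-I)$ versus $a(\xi_t)SI$ can be balanced since $I_{\min}$ is tiny; the cleanest route is to observe that at $S=\frac{b(-)}{a(-)}$ exactly, $\dot I = a(\xi_t)\big(\frac{b(-)}{a(-)}-\frac{b(\xi_t)}{a(\xi_t)}\big)I \ge 0$ for both values of $\xi_t$, and by continuity $\dot I>0$ slightly to the right, while the already-established upper region ($I$ bounded above away from $N$, $S$ bounded below) traps the solution from above; (iii) conclude that a suitable pentagon/hexagon cut out by $S=S_{\min}$, $I=I_{\min}$, the line $BC$ from Proposition \ref{prop3}, $I=0$ replaced by $I=I_{\min}$, and $S+I=N$ is positively invariant and absorbing; (iv) combine the upper bounds from Proposition \ref{prop3} (and $S,I\le N$) with the new lower bounds $S_{\min}, I_{\min}$ to read off permanence in the sense of Definition \ref{def1}, with $\epsilon = \min\{S_{\min}, I_{\min}, 1/N\}$ (after rescaling if the paper normalizes $N$).

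The main obstacle I anticipate is making the lower edge $I=I_{\min}$ genuinely invariant across \emph{both} switching regimes simultaneously: the sign of $\dot I$ on that edge depends on whether $S$ exceeds $\frac{b(\xi_t)}{a(\xi_t)}$, and since $\frac{b(+)}{a(+)}\le\frac{b(-)}{a(-)}$, there is an intermediate band $S\in(\frac{b(+)}{a(+)},\frac{b(-)}{a(-)})$ where the two regimes disagree in sign. The fix is that in that band one does not need $\dot I>0$ pointwise; one only needs that the solution cannot \emph{leave} downward before $S$ is driven up into the common-good region, which follows because $\dot S$ is uniformly bounded below by a positive constant on the whole strip $\{S_{\min}\le S\le \frac{b(-)}{a(-)},\ 0<I\le I_{\min}\}$ (take $I_{\min}$ small and use \eqref{e7b}-type estimates), so $S$ crosses $\frac{b(-)}{a(-)}$ in finite time uniformly, and during that crossing $I$ decreases at most at the controlled exponential rate $-\eta$, hence stays above a fixed positive level if $I_{\min}$ was chosen with that finite transit time in mind. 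Once this uniform-transit argument is in place, persistence of $I$ below by $I_{\min}$ follows, and permanence is immediate.
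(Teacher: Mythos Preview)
Your core idea is the same as the paper's: on the strip $\{S_{\min}\le S\le \tfrac{b(-)}{a(-)},\ 0<I\le \varepsilon_0\}$ one has $\dot S$ uniformly bounded below by a positive constant (since $\tfrac{b(-)}{a(-)}<N$) while $\dot I\ge -kI$ for some $k>0$, so the trajectory crosses the dangerous band in bounded ``effort'' with only a controlled exponential loss in $I$; once $S\ge \tfrac{b(-)}{a(-)}$ both regimes give $\dot I\ge 0$. The paper packages this in phase space rather than in time: it bounds $\dfrac{\dot I}{\dot S}>-kI$ on the strip, integrates the comparison ODE $\dfrac{dI}{dS}=-kI$ from $(S_{\min},\varepsilon_0)$ to the line $S=\tfrac{b(-)}{a(-)}$ to get a curve $\gamma$ ending at height $\varepsilon_1$, and takes the invariant region $G\subset ABCD$ bounded below by $\gamma$ for $S\le\tfrac{b(-)}{a(-)}$ and by the segment $I=\varepsilon_1$ for $S\ge\tfrac{b(-)}{a(-)}$. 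This is exactly your transit-time argument transformed into a geometric boundary, and it sidesteps the difficulty you flagged.

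Two points where your write-up should be tightened. First, your step (iii) asserts that a region with \emph{horizontal} floor $I=I_{\min}$ is positively invariant; as you yourself note in the ``obstacle'' paragraph, this is false on $\tfrac{b(+)}{a(+)}<S<\tfrac{b(-)}{a(-)}$ under the $-$ regime. The transit argument does not rescue invariance of that particular region --- it produces a \emph{lower} level $\varepsilon_1<\varepsilon_0$ at which the floor becomes sustainable only once $S\ge\tfrac{b(-)}{a(-)}$. So either replace the flat floor by the exponential curve (the paper's $G$), or drop the invariant-region language and argue the bound on $I(t)$ directly. Second, you need to explain how the trajectory \emph{enters} the trapping region: this uses Proposition~\ref{prop5}(a), which gives $\limsup_{t\to\infty}I(t)>\delta_1>\varepsilon_0$, together with Proposition~\ref{prop3} to place the solution in $ABCD$. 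The paper uses exactly this combination.
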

\begin{proof} 
Since  $\frac{b(-)}{a(-)}<N$, we can find an $0<\varepsilon _0<\delta_1$  such that $\min\big\{-a(\pm)si+c(\pm)(N-s-i)>0: 0<s\leq \frac{b(-)}{a(-)}, 0<i\leq\varepsilon_0\big\}>0$.
Then, while  $I(t)\leq\varepsilon_0$ and $S(t)\leq \frac{b(-)}{a(-)}$ we have $\dot S>0$ and $\dfrac{\dot I}{\dot S}=\dfrac{(a(\xi_t)S-b(\xi_t))I}{-a(\xi_t)SI+c(\xi_t)(N-S-I)}>-kI$ where $k$ is some positive number.
Denote by $\gamma$ the piece of the solution curve to the equation $\dfrac{d I}{d S}=-kI$ starting at $(S_{\min}, \varepsilon_0)$
and ending at the intersection point $(\frac{b(-)}{a(-)},\varepsilon_1)$ of this solution curve with the line $s=\frac{b(-)}{a(-)}$ .
Let $G$ be the subdomain  of quadrangle ABCD consisting of all $(s, i)\in ABCD$ lying above the curve $\gamma$ if $s\leq \frac{b(-)}{a(-)}$ and lying above the line $i=\varepsilon_1$ if $\frac{b(-)}{a(-)}\leq s\leq N$.
Obviously,  $G$ is invariant domain because $\dfrac{\dot I}{\dot S}>-kI, \dot S>0$ on $\gamma$ and $\dot I>0$ on the segment $I=\varepsilon_1,\frac{b(-)}{a(-)}\leq S\leq N$. 
Since $\limsup\limits_{t\to\infty}I(t) > \delta_1>\varepsilon_0$ and $(S(t), I(t))$ must eventually enter the quadrangle ABCD,  $(S(t), I(t))$ also eventually enters $G$ which implies that $I(t)$ ultimately bounded below by $I_{\min}=\varepsilon_1$.
\end{proof}

Our task in the next part is to describe the $\omega$-limit sets of the system (\ref{e3}).
 Adapted from the concept in \cite {BCF}, we define the (random) $\omega-$limit set 
%of the trajectories starting in a closed set $B$ 
%$$\Omega(B,\omega)=\bigcap_{T>0}\bar{\bigcup_{t>T}\big(x(t, \cdot, \omega), y(t, \cdot , \omega)\big)B}.$$
%In particular, the  $\omega-$limit set 
of the trajectory starting from an initial value $(s_0, i_0)$ by 
$$\Omega(s_0, i_0,\omega)=\bigcap_{T>0}\overline{\bigcup_{t>T}\big(S(t, s_0, i_0, \omega), I(t, s_0, i_0, \omega)\big)}.$$

 This concept is different from the one in \cite{CF} but it is closest to that of an $\omega-$limit set for a deterministic dynamical system. In the case where $\Omega(s_0, i_0,\omega)$ is a.s. constant, it is  similar to the concept of weak attractor and attractor given in \cite{M, YM}. Although, in general, the $\omega$-limit set in this sense does not  have the invariant property,  this concept is appropriate for our purpose of describing the pathwise asymptotic behavior of the solution with a given initial value.

% Denoted by $\Omega(s, i, \omega)$ the $\omega-$limit set of the solution $(S(t,s,i),I(t,s,i))(\omega)$ starting in $(s, i)$. 
Let $\pi_t^+(s,i)=(S^+(t,s,i), I^+(t,s,i))$, (resp. $\pi_t^-(s,i) =(S^-(t,s,i),I^-(t,s,i))$) be the solution of (\ref{e4}) (resp. (\ref{e5})) starting in the point $(s,i) \in \mathbb R^2_+$. 

From now on, let us fix an $(s_0,i_0)\in \R_+^2$ and suppose $\lambda>0$. This implies that at least one of the systems \eqref{e4}, \eqref{e5} has a globally asymptotically stable positive equilibrium. Without loss of generality, we  assume the equilibrium point of the system (\ref{e4}) has this property, i.e., $\lim_{t \to \infty}\pi_t^+(s,i)=(s^+_*,i^+_*)\in \text{int}\,\R^2_+$ for any $(s,i)\in \text{int}\,\R^2_+$. Also, suppose that $\xi _0=+$ with probability 1. 

For $\varepsilon>0$ small enough, denote by    $\mathcal U_{\varepsilon}(s, i)$ the $\varepsilon$-neighborhood of $(s,i)$ and by $H_{\varepsilon}\subset \mathbb R^2_+$ the compact set surrounded  by $AB, BC, CD$ and the line $i=\varepsilon.$
 Set 
$$S_n(\omega) = S(\tau_n,s_0,i_0, \omega); I_n(\omega) = I(\tau_n,s_0,i_0,\omega),\; \mathcal F_0^n = \sigma(\tau_k: k \leq n);\; \mathcal F_n^\infty = \sigma(\tau_k-\tau_n: k > n).$$ 
We see that $(S_n,I_n)$ is $\mathcal F_0^n-$ adapted. Moreover,  given $\xi_0$, then $\mathcal   F_0^n$ is independent of $\mathcal F_n^\infty$.
{\color{black} To depict the $\omega$-limit set, we need to obtain a key result that $(S_n, I_n)$ belongs to a suitable compact set infinitely often. In addition, we need to estimate the time a solution to \eqref{e4} starting in a compact set enters a neighborhood of the equilibrium $(s^+_*, i^+_*)$. These results are stated in the following lemmas.} 
\begin{lem}\label{lem7}
 Let  $J\subset \nabla\cap\{S>0, I>0\}$ be a compact set and $(s^+_*, i^+_*) \in J$. Then, for any $\delta _2>0$,  there is a $T_1=T_1(\delta _2)>0$ such that $\pi_t^+(s, i)\in \mathcal U_{\delta _2}(s^+_*, i^+_*)$ for any $t\geq T_1$ and $(s, i)\in J$.
\end{lem}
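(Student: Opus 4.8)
The plan is to obtain this uniform-in-initial-data statement from the already established global asymptotic stability of the equilibrium $(s^+_*,i^+_*)$ for the autonomous system \eqref{e4}, via a standard compactness argument: Lyapunov stability traps every orbit once it gets close to $(s^+_*,i^+_*)$, and continuous dependence on initial data together with a finite subcover of $J$ makes the ``time needed to get close'' uniform over $J$.

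First I would record that, since $\nabla$ is invariant and compact, the solution map $\pi^+:[0,\infty)\times\nabla\to\nabla$ is well defined and, as the vector field of \eqref{e4} is polynomial, it is continuous jointly in $(t,s,i)$ by continuous dependence on initial conditions. Next, because $(s^+_*,i^+_*)$ is asymptotically stable it is in particular Lyapunov stable, so for the given $\delta_2>0$ there is $\eta\in(0,\delta_2)$ with $\mathcal U_\eta(s^+_*,i^+_*)\subset\nabla\cap\{S>0,I>0\}$ and $\pi_t^+(\mathcal U_\eta(s^+_*,i^+_*))\subset\mathcal U_{\delta_2}(s^+_*,i^+_*)$ for all $t\ge 0$.

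Then, for each $(s,i)\in J$, global attractivity in $\mathrm{int}\,\R^2_+$ provides a time $t_{(s,i)}>0$ with $\pi_{t_{(s,i)}}^+(s,i)\in\mathcal U_{\eta/2}(s^+_*,i^+_*)$; by continuity of $\pi_{t_{(s,i)}}^+$ there is an open neighborhood $V_{(s,i)}$ of $(s,i)$ such that $\pi_{t_{(s,i)}}^+(V_{(s,i)})\subset\mathcal U_\eta(s^+_*,i^+_*)$. The family $\{V_{(s,i)}\}_{(s,i)\in J}$ is an open cover of the compact set $J$, so finitely many $V_{(s_1,i_1)},\dots,V_{(s_m,i_m)}$ already cover $J$; set $T_1=\max_{1\le j\le m}t_{(s_j,i_j)}$. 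Given any $(s,i)\in J$, it lies in some $V_{(s_j,i_j)}$, hence $\pi_{t_{(s_j,i_j)}}^+(s,i)\in\mathcal U_\eta(s^+_*,i^+_*)$; applying the flow (semigroup) property together with the Lyapunov-stability trapping at time $t_{(s_j,i_j)}$ yields $\pi_t^+(s,i)\in\mathcal U_{\delta_2}(s^+_*,i^+_*)$ for every $t\ge t_{(s_j,i_j)}$, and in particular for every $t\ge T_1$. This is exactly the assertion.

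The only point requiring care is the uniformity: each orbit converges, but a priori at its own rate, so the hitting times $t_{(s,i)}$ could fail to be bounded — this is precisely what compactness of $J$ and continuity of $\pi^+$ rule out through the finite subcover. Everything else is routine; alternatively one could write down an explicit Lyapunov function for \eqref{e4} and get a quantitative rate, but the compactness argument is shorter and suffices here. (The hypothesis $(s^+_*,i^+_*)\in J$ is not actually needed for the argument, only that $J$ be a compact subset of $\nabla\cap\{S>0,I>0\}$.)
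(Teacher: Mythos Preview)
Your proof is correct and follows essentially the same argument as the paper: Lyapunov stability yields a trapping neighborhood, global attractivity gives pointwise hitting times, continuous dependence on initial data extends these to open neighborhoods, and compactness of $J$ via a finite subcover makes the time bound uniform. The paper's proof is identical in structure (with $\bar\delta_2$, $T_{si}$, $\mathcal U_{si}$ in place of your $\eta$, $t_{(s,i)}$, $V_{(s,i)}$), and your observation that the hypothesis $(s^+_*,i^+_*)\in J$ is superfluous is correct.
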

\begin{proof}
Consider the system (\ref{e4}). Since $(s^+_*, i^+_*)$ is asymptotically stable, we can find a $\bar \delta_2=\bar \delta_2(\delta_2)>0$ such that
$$\pi^+_t\big(\mathcal U_{\bar \delta _2}(s^+_*, i^+_*)\big)\subset \mathcal U_{\delta_2 }(s^+_*, i^+_*)\quad\forall t\geq0.$$
On the one hand, for $(s, i)\in J$,  $\lim_{t\rightarrow \infty}\pi^+_t(s,i)=(s^+_*, i^+_*)$ which implies that there exists a $T_{si}$ satisfying 
\begin{equation*}
\pi^+_t(s,i)\in \mathcal U_{\bar\delta _2/2}(s^+_*, i^+_*)\quad\text{for all $t\geq T_{si}$}.
\end{equation*}
By the continuous dependence of the solutions on the initial conditions, there is a neighborhood $\mathcal U_{si}$ of $(s,i)$ such that
for any $(u,v)\in\mathcal  U_{si}$ we have
\begin{equation*}
\pi^+_{T_{si}}(u,v)\in \mathcal U_{\bar\delta _2}(s^+_*, i^+_*).
\end{equation*}
As a result, $$\pi^+_{t}(u,v)\in \pi^+_{t-T_{si}}\big(\mathcal U_{\bar\delta _2}(s^+_*, i^+_*)\big)\subset \mathcal U_{\delta_2 }(s^+_*, i^+_*)\quad\forall t\geq T_{si}.$$
Since $J$ is compact and the family $\{\mathcal U_{si}:(s,i)\in J\}$ is an open covering of $J$, by  Heine-Borel lemma, there is a finite subfamily, namely $\{\mathcal U_{s_ii_i},i=1,2,...,n\}$, which covers $J$.  Let $T_1=\max_{1\leq i\leq n}\{T_{s_ii_i}\}$. We see that 
if $(s, i)\in J$ then $\pi^+_t(s,i)\in \mathcal U_{\delta _2}(s^+_*, i^+_*)$ for any $t\geq T_1$.
\end{proof}

\begin{lem}\label{lem9} 
There is a compact set $K\in \mbox{int}\,\R^2_+$ such that, with probability 1, there are infinitely many $k=k(\omega)\in\mathbb N$ satisfying $(S_{2k+1},I_{2k+1})\in K.$
\end{lem}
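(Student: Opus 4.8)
The goal is to show that the embedded chain $(S_n,I_n)$ at the jump times, restricted to the odd indices (where $\xi=+$ just started, since $\xi_0=+$), returns to a fixed compact subset of $\mathrm{int}\,\R_+^2$ infinitely often, almost surely. The plan is to use the persistence estimate $\limsup_{t\to\infty}I(t)>\delta_1$ from Proposition~\ref{prop5}(a) together with the absorbing quadrangle $ABCD$ from Proposition~\ref{prop3}. Since every positive solution eventually enters and stays in $ABCD$, and since $S(t)$ is ultimately bounded below by $S_{\min}>0$, the only way $(S_n,I_n)$ could fail to lie in a compact set away from the axes is if $I_n$ gets arbitrarily close to $0$ for all large odd $n$. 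I would rule this out by combining two observations: first, $\limsup_{t\to\infty}I(t)>\delta_1$ forces $I(t)$ to exceed $\delta_1$ at arbitrarily large times; second, between a time when $I$ is large ($>\delta_1$) and a later time when it would be small ($<\varepsilon$), there must be a jump time $\tau_{2k+1}$ at which $I_{2k+1}$ is of intermediate size, bounded below by a fixed constant. The natural candidate is $K=H_{\varepsilon}$ (or the quadrangle $ABCD$ intersected with $\{I\ge\varepsilon\}$) for a suitably small fixed $\varepsilon\in(0,\delta_1)$.

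The key quantitative step is a uniform bound on how fast $I$ can decay along any piece of a trajectory (over one inter-jump interval or across several). From the second equation of \eqref{e3}, $\dot I/I=a(\xi_t)S(t)-b(\xi_t)\ge -b_{\max}$, so $I$ can decrease by at most a factor $e^{-b_{\max}\sigma_n}$ over the $n$-th interval. The difficulty is that the interval lengths $\sigma_n$ are unbounded random variables, so a single long interval could in principle drop $I$ from $\delta_1$ to below $\varepsilon$ without any jump time witnessing an intermediate value — but this is exactly where the structure helps: if $I$ starts above $\delta_1$ at \emph{some} time $t^*$ in the interior of interval $(\tau_{2k},\tau_{2k+1})$ or $(\tau_{2k+1},\tau_{2k+2})$, then at the nearest preceding \emph{odd} jump time $\tau_{2k+1}$ we still have a lower bound: either $t^*$ lies in an interval whose left endpoint is $\tau_{2k+1}$, in which case $I_{2k+1}\ge I(t^*)e^{-b_{\max}(t^*-\tau_{2k+1})}$ which is not yet uniform — so instead I would argue the other direction, tracking \emph{forward} from $\tau_{2k+1}$.

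The cleaner route I would actually pursue: fix $\varepsilon\in(0,\delta_1)$. Since $(S(t),I(t))$ enters $ABCD$ eventually and $S(t)$ is ultimately $\ge S_{\min}$, it suffices to show $I_{2k+1}\ge\varepsilon$ for infinitely many $k$. Suppose for contradiction that $I_{2k+1}<\varepsilon$ for all $k\ge k_0$. I claim this forces $I(t)<\delta_1$ for all large $t$, contradicting Proposition~\ref{prop5}(a). Indeed, on the interval $(\tau_{2k},\tau_{2k+1})$ the system follows \eqref{e5}; since $I_{2k+1}<\varepsilon$ and $\tau_{2k+1}$ is the \emph{right} endpoint, I'd want to control $\sup$ of $I$ over this interval in terms of $I_{2k+1}$, which again needs the reverse inequality $\dot I/I\le a_{\max}N-b_{\min}=:\kappa$. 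If $\kappa\le0$ (the subcritical boundary case handled by Corollary~\ref{cor6}) there is nothing to do, so assume $\kappa>0$; then $\sup_{(\tau_{2k},\tau_{2k+1})}I\le I_{2k}e^{\kappa\sigma_{2k+1}}$, still non-uniform. This shows the honest obstacle: \emph{one needs to exploit randomness of $(\sigma_n)$}, not just pathwise ODE bounds. I would therefore invoke the Borel--Cantelli lemma: the events $\{\sigma_n>M\}$ have summable-in-a-suitable-sense structure only after choosing $M$ large, but more to the point, $\{\sigma_{2k+1}\le M \text{ and } \sigma_{2k+2}\le M\}$ occurs for infinitely many $k$ a.s. (independence of the $\sigma$'s plus $\mathbb P(\sigma\le M)>0$). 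On such a ``good'' pair of short intervals following a return into $ABCD$, a uniform two-step estimate gives $I_{2k+1}\ge c(M)\cdot\delta_1$ provided $I$ exceeded $\delta_1$ somewhere nearby — and $\limsup I>\delta_1$ guarantees it does so near infinitely many jump times. Matching ``infinitely many good pairs'' with ``$I$ large infinitely often'' and choosing $\varepsilon<c(M)\delta_1$ yields the conclusion with $K=H_\varepsilon\cap\{S\ge S_{\min}\}$.

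\textbf{Main obstacle.} The crux is handling unbounded inter-jump times: pathwise ODE monotonicity bounds alone give no uniform control of $I$ at jump times, so the proof must pass through a Borel--Cantelli / independence argument on the $(\sigma_n)$ to secure infinitely many intervals of bounded length, and then patch these together with the $\limsup I>\delta_1$ estimate. Getting the bookkeeping right — aligning a time where $I>\delta_1$ with a subsequent \emph{odd} jump time sitting inside a window of two consecutive short intervals, all while the trajectory is already trapped in $ABCD$ — is the delicate part; everything else (the invariant quadrangle, the lower bound $S_{\min}$, the constants $c(M)$) is routine.
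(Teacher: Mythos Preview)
Your approach has a genuine gap, and it also misses the geometric idea that makes the lemma much easier than you think.

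\textbf{The gap.} Your argument hinges on ``matching infinitely many good pairs'' of short inter-jump intervals with the infinitely many times at which $I(t)>\delta_1$. But these two families of events are correlated through the dynamics: the times at which $I$ exceeds $\delta_1$ depend on the entire history of $(\sigma_n)$, so knowing that each family is realized infinitely often does not imply they ever coincide. You flag this step as ``delicate'' but do not carry it out. It \emph{can} be done by a stopping-time/filtration argument (define $\eta_k$ as the $k$-th odd index at which the trajectory has just visited $\{I\ge\delta_1\}$, check $\{\eta_k=n\}\in\mathscr F_0^n$, and use independence of $(\sigma_{n+1},\sigma_{n+2})$ from $\mathscr F_0^n$ exactly as in the paper's Appendix proof of Theorem~\ref{thm1}); but as written your argument is incomplete precisely at the point you identify as the crux.

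\textbf{What the paper does instead.} The paper avoids any probabilistic argument on $(\sigma_n)$ altogether. In the permanent case $\frac{b(-)}{a(-)}<N$ one simply takes $K=G$ from Proposition~\ref{prop61}. In the remaining case $\frac{b(+)}{a(+)}<N<\frac{b(-)}{a(-)}$, the key idea is to build $K\subset ABCD$ that is \emph{forward-invariant under the $+$ flow alone} (constructed just like $G$, with a curve $\gamma^+$ from $(S_{\min},\delta_1)$ to $(\frac{b(+)}{a(+)},I^+_{\min})$). Then whenever the trajectory is in $K$ at some time of a $+$ interval $[\tau_{2k},\tau_{2k+1})$, invariance carries it to $(S_{2k+1},I_{2k+1})\in K$ regardless of how long $\sigma_{2k+1}$ is. Since $\limsup I>\delta_1$ and any point of $ABCD$ with $I\ge\delta_1$ lies in $K$, one only needs to argue that $I$ reaches $\delta_1$ during a $+$ interval infinitely often. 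For this the paper uses that on $-$ intervals $\dot I<0$ (because $S\le N<\frac{b(-)}{a(-)}$): if from some $\tau_{2k}$ onward $I$ never reached $\delta_1$ during any $+$ interval, then $I<\delta_1$ on every subsequent $+$ interval and decreases on every $-$ interval, so $I(t)<\delta_1$ for all $t>\tau_{2k}$, contradicting Proposition~\ref{prop5}(a).

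So the ``main obstacle'' you identify---unbounded inter-jump times---is an artifact of choosing a set $K$ with no dynamical invariance. The paper's choice of $K$ makes the length of the $+$ interval irrelevant, and the monotonicity of $I$ on $-$ intervals makes the length of those irrelevant too; no Borel--Cantelli is needed.
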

\begin{proof}
In the case $\frac{b(-)}{a(-)}<N$, we can choose $K \equiv G$, that was established in Proposition \ref{prop61} and see that $(S_{2k+1},I_{2k+1})\in K$ for every $k>k_0$.

Suppose that $\frac{b(+)}{a(+)}<N<\frac{b(-)}{a(-)}$. With $\delta_1$ is shown in Proposition \ref{prop5}, by a  similar way to the construction of the set $G$ in the proof of Proposition \ref{prop61}, we construct a curve $\gamma^+$ for the system \eqref{e4}, with the initial point $(S_{\min},\delta_1)$ and the end point $(\frac{b(+)}{a(+)},I^+_{\min})$; then define the subdomain $K$ of quadrangle ABCD consisting of every $(s, i)\in ABCD$ lying above the curve $\gamma^+$ if $s\leq \frac{b(+)}{a(+)}$ and lying above the line $i=I^+_{\min}$ if $\frac{b(+)}{a(+)}\leq s\leq N$. It is seen that $K$ is an invariant set for the system \eqref{e4}. 
By Proposition  \ref{prop5},  there is a sequence $(\mu_n)\uparrow\infty$ such that $I(\mu_n)\geq \delta_1$ for all $n\in \N$. Further, since $I(t)$ is decreasing whenever $S(t)\leq\frac{b(+)}{a(+)}$, we can chose $(\mu_n)$ such that $S(\mu_n)>\frac{b(+)}{a(+)}$. If  $\tau_{2k}\leq \mu_n< \tau_{2k+1}$, i.e. $\xi_{\mu_n}=+$, we have $(S_{2k+1},I_{2k+1})\in K$ because $(S(\mu_n),T(\mu_n))\in K$ and $K$ is invariant set for the system \eqref{e4}. For   $\tau_{2k-1}\leq \mu_n< \tau_{2k}$, if $\sup_{\tau_{2k}\leq t< \tau_{2k+1}} I({t})\geq\delta_1$ we return to the above case. Otherwise, $I({t})<\delta_1$ for any $ \tau_{2k}\leq t< \tau_{2k+2}$ since $I(t)$ is decreasing on $[\tau_{2k+1}, \tau_{2k+2})$... Continuing this process, we can either find an odd number $2m+1>k$ such that $(S_{2m+1}, I_{2m+1})\in K$ or see that $I(t)<\delta_1\; \forall t>\tau_{2k}$. The latter contradicts to  Proposition \ref{prop5}. The proof is complete. 
\end{proof}

%\begin{lem}\label {lem10}
%Let $T_2>0$ and $(\bar{s},\bar{i}) = \pi^-_{T_2}(s^+_*,i^+_*)$. For any $\delta _4>0$, there exist $\delta _3>0$ and $\overline T_2>0$ such that $\pi_t^-(u,v) \in \mathcal U_{\delta _4}(\bar{s},\bar{i})$ for all $T_2-\overline T_2\leq t\leq T_2+\overline T_2$ provided  $(u,v)\in \mathcal U_{\delta _3}(s^+_*,i^+_*)$.
%\end{lem}
%\begin{proof}
%This result is a consequence of Lemma \ref{lem7} and the continuous dependence of the solutions on the initial values.
%\end{proof}

%\begin{lem} \label{lem11}
%For any $\delta _6>0$ and $(s, i)\in H_{\varepsilon}$, there exists a $\delta _5>0$ such that $\pi_t^+(u,v) \in \mathcal U_{\delta_6}(\pi_t^+(s, i)) \; \forall t>0$ (resp. $\pi_t^-(u,v) \in \mathcal U_{\delta_6}(\pi_t^-(s, i)) \; \forall t>0$), where $(u,v)\in \mathcal U_{\delta _5}(s, i)\cap H_{\varepsilon}$.
%\end{lem}
%\begin{proof}
%The conclusion follows straightforward from the continuous dependence of the solutions on the initial values.\end{proof}

{\color{black}Having the above lemmas, we now in the position to describe the pathwise dynamic behavior of the solutions of the system (\ref{e3}).}
Put
\begin{equation}\label{e11}
\Gamma=\left\{(s, i)=\pi_{t_n}^{\varrho(n)}\circ\cdots\circ\pi_{t_2}^{+}\circ\pi_{t_1}^{-}( s^+_*,  i^+_*):0\leq t_1,t_2,\cdots,t_n; \; n\in \mathbb N \right\}.
\end{equation}
where $\varrho(k)=(-1)^k$.% if $k$ is odd and $\varrho(k)=+$ if $k$ is even. 
 We state the following theorem which is proved in Appendix.
\begin{thm}\label{thm1} \
If $\lambda>0$ then for almost all $\omega$, the closure $\overline {\Gamma}$ of $\Gamma$ is a subset of $\Omega(s_0, i_0,\omega)$. 
\end{thm}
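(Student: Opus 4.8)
The plan is to show that for every point $z=\pi_{t_n}^{\varrho(n)}\circ\cdots\circ\pi_{t_1}^{-}(s^+_*,i^+_*)\in\Gamma$ and every $\eps>0$, the solution $(S(t),I(t))$ visits $\mathcal U_\eps(z)$ at arbitrarily large times, almost surely. Since $\Gamma$ is generated by finitely many flow maps applied for nonnegative times and $\overline{\Gamma}$ is separable, it suffices to prove this for a fixed $z$ (then intersect the countably many full-measure events over a dense sequence of $z$'s and a sequence $\eps\downarrow0$, and use that $\Omega$ is closed). The first step is a reduction to the equilibrium: by Lemma 3.9, with probability one there are infinitely many odd indices $k$ with $(S_{2k+1},I_{2k+1})\in K$ for a fixed compact $K\subset\mathrm{int}\,\R^2_+$; applying Lemma 3.7 with $J$ a compact set containing $K$ and $(s^+_*,i^+_*)$, one sees that after a further deterministic time $T_1(\delta)$ of evolution under the $+$-system the trajectory is within $\delta$ of $(s^+_*,i^+_*)$. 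Concretely, on the event $\{(S_{2k+1},I_{2k+1})\in K\}\cap\{\sigma_{2k+2}>T_1(\delta)\}$ we get $(S(\tau_{2k+1}+T_1(\delta)),I(\tau_{2k+1}+T_1(\delta)))\in\mathcal U_\delta(s^+_*,i^+_*)$.

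The second step is to prescribe the subsequent jump times so that the flow maps appearing in $z$ are realized. Starting from a point $\delta$-close to $(s^+_*,i^+_*)$ at a switching instant where $\xi=+$, we want the next sojourn (in state $-$) to have length close to $t_1$, the following (state $+$) length close to $t_2$, and so on up to $t_n$; by continuous dependence on initial data and on the holding times, if each $\sigma$ lands in a small interval around the target $t_j$ and $\delta$ is small, the endpoint of this concatenation of flows is within $\eps$ of $z$. Here one must be slightly careful about parity: the map $\pi^{-}$ must be applied first, which is why we condition on reaching $K$ at an \emph{odd} index $2k+1$ (so $\xi_{\tau_{2k+1}}=-$ on $(\tau_{2k+1},\tau_{2k+2})$); if $z$ starts instead with $\pi^+$ one absorbs an initial $\pi^+_0=\mathrm{id}$ or shifts the index, using that $\Gamma$ is invariant under prepending $\pi^+_0$. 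The key probabilistic input is that, conditionally on $\mathcal F_0^{2k+1}$, the future holding times $\sigma_{2k+2},\dots,\sigma_{2k+1+n}$ are independent exponentials, so the event $E_k$ that $\sigma_{2k+2}>T_1(\delta)$ and the next $n$ holding times fall in the prescribed small intervals has a fixed positive probability $p_0>0$ independent of $k$.

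The third step is a Borel–Cantelli / second-Borel–Cantelli argument along the random subsequence. Let $(k_j)$ enumerate the (a.s. infinite) set of odd indices with $(S_{k_j},I_{k_j})\in K$. By the Markov property the events $E_{k_j}$ are conditionally independent given the $\sigma$-algebra generated by $(k_j)$ and the $(S_{k_j},I_{k_j})$, each with probability $\ge p_0$; since $\sum_j p_0=\infty$, infinitely many $E_{k_j}$ occur almost surely. On each such occurrence the trajectory is within $\eps$ of $z$ at a time $\ge\tau_{k_j}\to\infty$, hence $z\in\Omega(s_0,i_0,\omega)$. Taking the intersection over $z$ in a countable dense subset of $\overline\Gamma$ and over $\eps=1/m$ gives $\overline\Gamma\subseteq\Omega(s_0,i_0,\omega)$ a.s. The main obstacle I anticipate is the careful bookkeeping in the second step: making the "$\delta$ small and each $\sigma_j$ close to $t_j$ forces the endpoint $\eps$-close to $z$" estimate uniform in the starting index $k$, which requires a uniform continuity statement for the composition of $n$ flow maps on a compact set — this is where one invokes continuous dependence on initial conditions and parameters on a compact time interval $[0,\max_j t_j+1]$, together with the fact (Proposition 3.5) that all relevant trajectories stay in the absorbing quadrangle $ABCD$, a fixed compact set.
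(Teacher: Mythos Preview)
Your overall strategy---Borel--Cantelli along the random subsequence of visits to $K$, exploiting that subsequent holding times are independent exponentials, then propagating by continuous dependence through finitely many prescribed sojourns---is exactly the paper's approach. However, your Step 1 contains a parity error that breaks the argument as written.

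You correctly note that on $(\tau_{2k+1},\tau_{2k+2})$ the process is in state $-$. But then the claim ``on $\{(S_{2k+1},I_{2k+1})\in K\}\cap\{\sigma_{2k+2}>T_1(\delta)\}$ the trajectory is $\delta$-close to $(s_*^+,i_*^+)$ at time $\tau_{2k+1}+T_1(\delta)$'' is false: on that interval the system evolves under the $-$-flow, and Lemma 3.7 says nothing about $\pi^-$ bringing points close to the $+$-equilibrium (indeed when $\frac{b(-)}{a(-)}\geq N$ the $-$-flow drives $I$ to $0$). One more sojourn is needed. The paper handles this by conditioning on \emph{two} consecutive holding times: a short $-$-sojourn $\sigma_{\eta_k+1}<T_2$ (so that $I$ cannot decay below $\Delta:=I_{\min}e^{-b_{\max}T_2}$, keeping the point in the compact set $H_\Delta$), followed by a long $+$-sojourn $\sigma_{\eta_k+2}>T_1(\delta)$, which is where Lemma 3.7 applies. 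With this correction the switching instant at which you are $\delta$-close to $(s_*^+,i_*^+)$ is $\tau_{\eta_k+2}$, an \emph{odd} jump time, so the next sojourn is again in state $-$ and your Step 2 proceeds as intended with $\pi^-$ applied first.

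A secondary point: your ``conditional independence'' in Step 3 is morally right but not literally true, since $\eta_{k+1}$ depends on $\sigma_{\eta_k+1},\sigma_{\eta_k+2}$. The paper avoids this by a direct computation showing $\mathbb P\bigl(\bigcup_{i=k}^n A_i\bigr)=1-(1-p_0)^{n-k+1}$ via the identity $\mathbb P(A_k\cap A_{k+1})=p_0^2$, exploiting that $\{\eta_k=2l+1,\eta_{k+1}=2n+1\}\in\mathcal F_0^{2n+1}$ is independent of $(\sigma_{2n+2},\sigma_{2n+3})$. You should either reproduce this calculation or invoke a conditional Borel--Cantelli lemma more carefully.
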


{\color{black}The following theorem provide a complete description of the $\omega$-limit set of the solution to \eqref{e3} in case $\lambda>0$.}
\begin{thm}\label{thm2} Suppose $\lambda>0,$\
\begin{enumerate} 
\item[a)] If  \begin{equation} \label{e12a}
\frac{a(+)}{a(-)}=\frac{b(+)}{b(-)}=\frac{c(+)}{c(-)},
\end{equation} 
the systems \eqref{e4} and \eqref{e5} have the same equilibrium. Moreover, all positive solutions to the system \eqref{e3} converge to this equilibrium with probability 1.
\item[b)] If \eqref{e12a} is not satisfied then, with probability 1, the  $\overline{\Gamma}=\Omega(s_0, i_0,\omega)$. Moreover, $\overline \Gamma$ absorbs all positive solutions in the sense that for any initial value $(s_0, i_0)\in\rm int\R^2_+$, the value  $\gamma{(\omega)}=\inf\{t>0: (S(\bar t, s_0, i_0,\omega), I(\bar t, s_0,i_0,\omega))\in{\bar \Gamma} \,\, \forall\, \bar t>t\}$ is finite outside a $\mathbb P$-null set.
\end{enumerate}
\end{thm}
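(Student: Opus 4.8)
We treat the two parts separately; throughout, $f_+$ and $f_-$ denote the right‑hand sides of \eqref{e4} and \eqref{e5} viewed as vector fields on $\R^2$.

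\emph{Part (a).} The first step is to check that \eqref{e12a} forces the two equilibria in \eqref{e6} to coincide. Writing $\kappa:=a(+)/a(-)=b(+)/b(-)=c(+)/c(-)>0$, the identity $b(+)/b(-)=a(+)/a(-)$ gives $s^+_*=b(+)/a(+)=b(-)/a(-)=s^-_*$, and $c(+)/c(-)=b(+)/b(-)$ gives $c(+)\big(b(-)+c(-)\big)=c(-)\big(b(+)+c(+)\big)$, hence $i^+_*=i^-_*$; call the common point $(s_*,i_*)$. Under \eqref{e12a} one computes $\lambda=\big(pa(+)+qa(-)\big)(N-s_*)$, so $\lambda>0$ yields $N>s_*$, i.e.\ $(s_*,i_*)$ is the globally asymptotically stable endemic equilibrium of both \eqref{e4} and \eqref{e5}. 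The decisive remark is that $f_+=\kappa f_-$. Hence, setting $\phi(t)=\int_0^t\rho(\xi_{\bar t})\,d\bar t$ with $\rho(+)=1,\ \rho(-)=\kappa^{-1}$, the curve $t\mapsto\pi^+_{\phi(t)}(s_0,i_0)$ solves \eqref{e3} with initial value $(s_0,i_0)$ (in state $-$, $\tfrac{d}{dt}\pi^+_{\phi(t)}=\kappa^{-1}f_+=f_-$), so by uniqueness $\big(S(t),I(t)\big)=\pi^+_{\phi(t)}(s_0,i_0)$. Since $\rho\ge\min\{1,\kappa^{-1}\}>0$ we have $\phi(t)\to\infty$, whence $\big(S(t),I(t)\big)\to(s_*,i_*)$ with probability one.

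\emph{Part (b): reduction.} I would first record that $\Gamma$ in \eqref{e11} is invariant under each flow $\pi^+_t,\pi^-_t$ ($t\ge0$): applying $\pi^+$ to an element of $\Gamma$ either absorbs into the outermost factor (when it is a $\pi^+$, by the semigroup law) or prepends a new $\pi^+$ factor (when it is a $\pi^-$, which respects the alternation $\varrho(k)=(-1)^k$), and it fixes the base point $(s^+_*,i^+_*)$; likewise for $\pi^-$. Therefore $\overline\Gamma$ is invariant under $\pi^\pm_t$, hence forward invariant for \eqref{e3}. Granting this, both assertions of (b) reduce to the single claim
$$(\star)\qquad\text{for a.a.\ }\omega\text{ the solution enters }\overline\Gamma\text{ at some finite time.}$$
Indeed, $(\star)$ together with forward invariance is exactly the absorbing statement $\gamma(\omega)<\infty$; and if $(\star)$ holds then for $T>\gamma(\omega)$ the tail $\{(S(t),I(t)):t>T\}$ lies in the closed set $\overline\Gamma$, so $\Omega(s_0,i_0,\omega)\subset\overline\Gamma$, which combined with Theorem \ref{thm1} gives $\overline\Gamma=\Omega(s_0,i_0,\omega)$.

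\emph{Part (b): proof of $(\star)$.} The main step is to show $\overline\Gamma$ has nonempty interior. A direct computation gives $f_{+,1}f_{-,2}-f_{+,2}f_{-,1}=i\,G(s,i)$ with $G$ a polynomial whose coefficients vanish simultaneously \emph{iff} \eqref{e12a} holds; also $f_-(s^+_*,i^+_*)\neq0$ when \eqref{e12a} fails (else $(s^+_*,i^+_*)$ is an equilibrium of \eqref{e5}, which again forces \eqref{e12a}). One deduces — using that $\Gamma$ contains the nondegenerate orbit‑arc $\{\pi^-_t(s^+_*,i^+_*)\}$, along which $f_+$ and $f_-$ cannot be everywhere collinear unless \eqref{e12a} holds — that there is $z_0\in\Gamma$ with $f_+(z_0),f_-(z_0)$ linearly independent; then $(t_1,t_2)\mapsto\pi^+_{t_2}\circ\pi^-_{t_1}(z_0)$ has rank $2$ at $(0,0)$, so its image, a subset of $\Gamma$, contains an open set $\mathcal O$. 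Pick $w\in\mathcal O\cap\Gamma$, say $w=\pi^{\varrho(n)}_{t_n}\circ\cdots\circ\pi^-_{t_1}(s^+_*,i^+_*)$ with all $t_j>0$. Let $K$ be as in Lemma \ref{lem9}, let $J\subset\nabla\cap\{S>0,I>0\}$ be a compact set containing a neighbourhood of $K$ and the point $(s^+_*,i^+_*)$, and let $T_1$ be the time from Lemma \ref{lem7} for $J$ and a small radius $\bar\delta$. For successive well separated (say by more than $n+2$) indices $k$ with $(S_{2k+1},I_{2k+1})\in K$, consider the event $E_k$ about the ensuing sojourn times: $\sigma_{2k+2}<\eta$, $\sigma_{2k+3}\in(T_1,T_1+1)$, $\sigma_{2k+3+j}\in(t_j-\eta,t_j+\eta)$ for $j=1,\dots,n$. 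Since $\xi_0=+$, the interval $(\tau_{2k+1},\tau_{2k+2})$ is in state $-$, $(\tau_{2k+2},\tau_{2k+3})$ in state $+$, and $(\tau_{2k+3+j-1},\tau_{2k+3+j})$ in state $\varrho(j)$; hence on $\{(S_{2k+1},I_{2k+1})\in K\}\cap E_k$ the solution stays in $J$ during the short $-$‑sojourn, enters $\mathcal U_{\bar\delta}(s^+_*,i^+_*)$ during the long $+$‑sojourn by Lemma \ref{lem7}, then follows a concatenation $\pi^-,\pi^+,\dots$ of times $\approx t_1,\dots,t_n$, and so, by continuous dependence on data and time with $\bar\delta,\eta$ small enough, lands in $\mathcal O\subset\overline\Gamma$ by time $\tau_{2k+3+n}$ and stays there. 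As the $\sigma_j$ are, conditionally on $\mathcal F_0^{2k+1}$, independent exponentials with rates in $\{\alpha,\beta\}$, we get $\mathbb P(E_k\mid\mathcal F_0^{2k+1})=p_0>0$, uniformly in $k$ and in $(S_{2k+1},I_{2k+1})$. Taking stopping times at a sequence of returns to $K$ spaced so that these events occupy disjoint blocks of sojourn times (possible by Lemma \ref{lem9}) and invoking the strong Markov property of $\xi$, at least one $E_k$ occurs a.s.; this proves $(\star)$, and hence (b).

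\emph{Main obstacle.} The genuinely non‑routine point is the very first step of the last paragraph: showing that $\overline\Gamma$ has nonempty interior — equivalently, that the switched system \eqref{e3} is accessible from the base point $(s^+_*,i^+_*)$ — and verifying that this is precisely where failure of \eqref{e12a} enters (the delicate sub‑point being to exclude that $f_+,f_-$ stay collinear along the whole orbit‑arc issuing from $(s^+_*,i^+_*)$, which one rules out by comparing the explicit polynomial right‑hand sides). Everything else is bookkeeping: the semigroup/alternation check giving forward invariance of $\overline\Gamma$, the parity of the environmental state along a trajectory with $\xi_0=+$, continuous dependence, uniformity of the exponential‑interval probabilities over the compact set $K$, and a conditional Borel–Cantelli argument along well‑separated returns to $K$.
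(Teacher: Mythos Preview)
Your proof is correct, and in part (a) you take a genuinely different route from the paper. The paper argues as follows: from \eqref{e12a} the two vector fields are parallel, so any neighbourhood of $(s_*,i_*)$ that is positively invariant for \eqref{e4} (obtained from local asymptotic stability) is automatically positively invariant for \eqref{e5}, hence for \eqref{e3}; then Theorem~\ref{thm1} is invoked to know the solution eventually enters each such neighbourhood. Your argument is cleaner: once $f_+=\kappa f_-$, the switched trajectory is literally a smooth time-change of a single $\pi^+$-orbit, and convergence follows from $\phi(t)\to\infty$. This avoids both Theorem~\ref{thm1} and the Stable Manifold Theorem, at the cost of the (easy) check that the reparametrised curve really solves \eqref{e3}.

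For part (b) your strategy coincides with the paper's at the decisive point: locate some $z_0$ on the $\pi^-$-orbit through $(s^+_*,i^+_*)$ where $f_+,f_-$ are transversal (this is exactly where failure of \eqref{e12a} enters), and use the inverse function theorem on $(t_1,t_2)\mapsto\pi^+_{t_2}\pi^-_{t_1}(z_0)$ restricted to a small positive rectangle to produce an open set $U\subset\Gamma$. The paper phrases the transversality step as ``the quadratic curve \eqref{e12d} cannot be an integral curve of \eqref{e5}''; your formulation via the polynomial $G$ is equivalent. The one place you diverge is in proving $(\star)$: you mount a separate conditional Borel--Cantelli argument along returns to $K$, essentially reproducing the machinery of the Appendix proof of Theorem~\ref{thm1}. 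The paper sidesteps this entirely: since Theorem~\ref{thm1} already gives $U\subset\Gamma\subset\overline\Gamma\subset\Omega(s_0,i_0,\omega)$ a.s., and $U$ is open, the trajectory must visit $U$ at some finite time $\gamma$; forward invariance of $\Gamma$ then traps it. So your $(\star)$ argument is correct but redundant --- Theorem~\ref{thm1} (which you do cite for the inclusion $\overline\Gamma\subset\Omega$) already delivers the hitting time for free.
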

\begin{proof}
a) It is easy to see that the systems \eqref{e4} and \eqref{e5} have the same equilibrium, $(s^+_*, i^+_*)=(s^-_*, i^-_*)=:(s_*,i_*)$ if and only if the condition \eqref{e12a} is satisfied. 
Let $\varepsilon>0$ be arbitrary. Since $(s_*, i_*)$ is globally asymptotically stable,  there is a neighborhood $ V_\varepsilon\subset \mathcal U_\eps(s_*, i_*)$,  invariant under the system \eqref{e4} (see The Stable Manifold Theorem, \cite[pp 107]{Pe}). Under the condition \eqref{e12a}, the vector fields of both systems \eqref{e4} and \eqref{e5} have the same direction at every point $(s,i)$. As a result, $ V_\varepsilon$  is also invariant under the system \eqref{e5}, which implies that  $ V_\varepsilon$ is invariant under the system \eqref{e3}.
By Theorem \ref{thm1}, $(s_*, i_*)\in \Omega(s_0,i_0,\omega)$ for almost all $\omega$. Therefore, $T_{V_\varepsilon}=\inf\big\{t>0: (S(t), I(t))\in V_{\varepsilon}\big\}<\infty\,$ a.s.
 Consequently, $(S(t), I(t))\in  V_{\varepsilon} \,\, \forall t>T_{ V_\varepsilon}$. This property says that $(S(t), I(t))$ converges to $(s_*, i_*)$ with probability 1 if $S(0)>0, I(0)>0$.

b) We will show that if there exists a $t_0>0$ such that the point $(\bar s_0, \bar i_0)=\pi_{t_0}^-(s^+_*, i^+_*)$ satisfies the following condition
 \begin{equation}\label{e12}
\det\begin{pmatrix} \dot{S}^+(\bar s_0, \bar i_0)& \dot{S}^-(\bar s_0, \bar i_0) \\ \dot{I}^+(\bar s_0, \bar i_0) & \dot{I}^-(\bar s_0, \bar i_0)
\end{pmatrix}\ne 0,
\end{equation}
then, with probability 1, the closure $\overline{\Gamma}$ of $\Gamma$ coincides $\Omega(s_0, i_0,\omega)$ and $\overline \Gamma$ absorbs all positive solutions.

Indeed, let $(\bar s_0, \bar i_0)=\pi_{t_0}^-(s_*^+, i_*^+)$ be a point in ${\rm int}\R_+^2$ satisfying the condition \eqref{e12}. By the existence and continuous dependence on the initial values of the solutions, there exist two numbers  $d>0$ and $e>0$ such that the function $\varphi(t_1, t_2)=\pi_{t_2}^+\pi_{t_1}^-(\bar s_0, \bar i_0)$ is defined and continuously differentiable in $(-d, d)\times (-e, e)$.
We note that 
\begin{align*}
&\det\left(\dfrac{\partial\varphi}{\partial t_1}, \dfrac{\partial\varphi}{\partial t_2}\right)\Big| _{(0, 0)}=\\
&\det\begin{pmatrix} -a(+)\bar s_0\bar i_0 +c(+)(N-\bar s_0-\bar i_0)& -a(-)\bar s_0\bar i_0 +c(-)(N-\bar s_0-\bar i_0)\\ a(+)\bar s_0\bar i_0 - b(+)\bar i_0& a(-)\bar s_0\bar i_0 - b(-) \bar i_0
\end{pmatrix}\ne 0.
\end{align*}
Therefore, by the Inverse Function Theorem, there exist $0<d_1<d,  0<e_1<e$ such that $\varphi(t_1, t_2)$ is a diffeomorphism between  $V=(0, d_1)\times(0, e_1)$ and $U=\varphi(V)$. As a consequence, $U$ is an open set. Moreover, for every $(s, i)\in U$, there exists a  $(t_1^*, t_2^*)\in(0, d_1)\times(0, e_1)$ such that $(s, i)=\pi_{t_2^*}^+\pi_{t_1^*}^-(\bar s_0,\bar i_0)\in S$.  Hence,  $U\subset \Gamma \subset \Omega(s_0,i_0,\omega)$. Thus, there is a stopping time  $ \gamma<\infty$ a.s. such that $(S(\gamma), I(\gamma))\in U$. Since $\Gamma$ is a forward invariant set and $U\subset \Gamma$, it follows that $(S(t),I(t))\in \Gamma\,\,\forall t>\gamma$ with probability 1.  The fact $(S(t), I(t))\in \Gamma$ for all $t>\gamma$ implies that $\Omega(s_0, i_0,\omega)\subset \overline{\Gamma}$. By combining with Theorem \ref{thm1} we get $\overline \Gamma=\Omega(s_0,i_0,\omega)$ a.s.

Finally, we need to show that when condition \eqref{e12a} does not happen, it
ensures the existence of $t_0$ satisfying \eqref{e12}. Indeed, 
consider the set of all $(s,i)\in \text{int\,}\R_+^2$ satisfying
\begin{equation}\label{e12c}
\det\begin{pmatrix} \dot{S}^+(s,i)& \dot{S}^-(s,i) \\ \dot{I}^+(s,i) & \dot{I}^-(s,i)
\end{pmatrix}= 0, 
\end{equation}
or
\begin{equation}\label{e12d}
\big[ -a(+)si+c(+)(N-s-i) \big]\big[a(-)s-b(-)\big]-[ -a(-)si+c(-)(N-s-i) \big]\big[a(+)s-b(+)\big]=0. 
\end{equation}
The equation \eqref{e12d} describes a
quadratic curve. However, it is easy to prove that any quadratic curve is not the integral curve of the system \eqref{e5}. This means that we can find a $t_0$ such that the point $(\bar s_0, \bar i_0)=\pi_{t_0}^-(s^+_*, i^+_*)$ satisfies the condition \eqref{e12}. The proof is complete.
\end{proof}
{\color{black}It is well-known that the triplet $(\xi_t, S(t), I(t))$ is a homogeneous Markov process with the state space 
$\mathscr{V}:=E\times \text{int}\R_+^2$. In the rest of this section, we prove the existence of a stationary distribution for the process $(\xi_t, S(t), I(t))$. Moreover, some nice properties of the stationary distribution and the convergence in total variation are given. If $\lambda>0$ and $\eqref{e12a}$ holds, all positive solutions converge almost surely to the equilibrium $(s_*^+,i_*^+)=(s_*^-,i_*^-)$. Otherwise, we have the following theorem whose proof is given in Appendix.}
\begin{thm}\label{thm3}
If $\lambda>0$ and  \eqref{e12a} does not hold, then $(\xi_t, S(t), I(t))$ has a stationary distribution $\nu^*$, concentrated on $E\times(\nabla\cap \text{int} \,\R^2_+)$. In addition, $\nu^*$ is the unique stationary distribution having the density $f^*$, and for any initial distribution, the distribution of $(\xi_t, S(t), I(t)$ converges to $\nu^*$ in total variation.
\end{thm}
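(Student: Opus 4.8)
The plan is to regard $(\xi_t,S(t),I(t))$ as a piecewise-deterministic Markov process and to transfer the analysis to the chain observed at the jump times. Taking $\xi_0=+$ (the case $\xi_0=-$ is symmetric and, as will come out of the argument, the stationary law does not depend on the initial state), I would work with the time-homogeneous chain $Y_k:=(S_{2k+1},I_{2k+1})$ on $\text{int}\,\R^2_+$, whose one-step kernel is
\[
Q(y,A)=\int_0^\infty\int_0^\infty \beta e^{-\beta s_1}\,\alpha e^{-\alpha s_2}\,\mathbf 1_A\big(\pi^+_{s_2}\pi^-_{s_1}(y)\big)\,ds_1\,ds_2 .
\]
The strategy is threefold: (i) show $Q(y,\cdot)$ is absolutely continuous for every $y$; (ii) establish a Doeblin minorization for $Q$ on a compact set that is recurrent by Lemma \ref{lem9}; (iii) lift the resulting unique, absolutely continuous invariant law $\bar\nu$ of $(Y_k)$ and the total-variation convergence back to $(\xi_t,S(t),I(t))$.

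For (i), write $\varphi_y(s_1,s_2)=\pi^+_{s_2}\pi^-_{s_1}(y)$ and let $Q_0$ be the conic cut out by \eqref{e12c} (equivalently \eqref{e12d}). Since a vector field is invariant under its own flow, the two columns of $D\varphi_y(s_1,s_2)$ are the image, under the invertible matrix $D\pi^+_{s_2}|_{\pi^-_{s_1}(y)}$, of the vectors $(\dot S^+,\dot I^+)$ and $(\dot S^-,\dot I^-)$ evaluated at $\pi^-_{s_1}(y)$; hence $\varphi_y$ is a local diffeomorphism exactly where $\pi^-_{s_1}(y)\notin Q_0$. When \eqref{e12a} fails, $Q_0$ is a proper conic, and, by the same elementary fact invoked in the proof of Theorem \ref{thm2}(b), no conic is an integral curve of \eqref{e5}; since $s_1\mapsto\pi^-_{s_1}(y)$ is real-analytic, the set $\{s_1\ge 0:\pi^-_{s_1}(y)\in Q_0\}$ has Lebesgue measure zero for every $y$. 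Therefore $\varphi_y$ is a local diffeomorphism for almost every $(s_1,s_2)$, and the area formula rewrites the displayed integral as $Q(y,A)=\int_A\rho(y,z)\,dz$ with an explicit density $\rho(y,\cdot)\in L^1$, so $Q(y,\cdot)\ll\text{Leb}$ for all $y\in\text{int}\,\R^2_+$.

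For (ii), by Lemma \ref{lem9} fix a compact $K\subset\text{int}\,\R^2_+$, which I may take to contain $(s^+_*,i^+_*)$, that $(Y_k)$ visits infinitely often almost surely. Applying Lemma \ref{lem7} to a compact set enclosing $(s^+_*,i^+_*)$ together with all points $\pi^-_{s_1}(y)$ for $y\in K$, $s_1\le 1$, one gets: for every $y\in K$, with probability at least some $p_0>0$ (requiring $s_1\le 1$ and $s_2$ in a fixed window past the time $T_1$ furnished by Lemma \ref{lem7}) the chain lands in the prescribed neighbourhood $\mathcal W:=\mathcal U_{\delta_2}(s^+_*,i^+_*)$, i.e.\ $Q(y,\mathcal W)\ge p_0$. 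Next, with $(\bar s_0,\bar i_0)=\pi^-_{t_0}(s^+_*,i^+_*)$, $V=(0,d_1)\times(0,e_1)$ and $U=\varphi(V)$ as in the proof of Theorem \ref{thm2}(b), and since $\varphi(t_1,t_2)=\varphi_{(s^+_*,i^+_*)}(t_1+t_0,t_2)$ is a diffeomorphism of $V$ onto $U$, continuity of $w\mapsto\varphi_w$ gives, for $\delta_2$ small enough, an open set $U'$ with $\overline{U'}\subset U$ and a constant $c_0>0$ such that $Q(w,\cdot)\ge c_0\,\text{Leb}(\,\cdot\cap U')$ for all $w\in\mathcal W$. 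Hence $Q^2(y,\cdot)\ge \eta\,m_0(\cdot)$ for every $y\in K$, where $m_0$ is normalized Lebesgue measure on $U'$ and $\eta=p_0c_0\,\text{Leb}(U')$. The classical Doeblin argument then yields a unique invariant probability $\bar\nu$ for $(Y_k)$ and geometric convergence in total variation for every initial law; and $\bar\nu=\bar\nu Q$ with $Q(y,\cdot)\ll\text{Leb}$ forces $\bar\nu\ll\text{Leb}$.

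For (iii), the continuous-time stationary law $\nu^*$ is recovered from $\bar\nu$ by the standard renewal/size-biasing construction — up to normalization it is the $\bar\nu$-expected occupation measure of the trajectory over one inter-jump interval, carrying the running value of $\xi$. Because this amounts to integrating the (absolutely continuous) deterministic flows of \eqref{e4} and \eqref{e5} against the absolutely continuous measure $\bar\nu$, the law $\nu^*$ has a density $f^*$; in particular it charges no Lebesgue-null set and, since the flows issued from $\text{int}\,\R^2_+$ remain in $\nabla\cap\text{int}\,\R^2_+$ for all finite times, $\nu^*$ is concentrated on $E\times(\nabla\cap\text{int}\,\R^2_+)$, with uniqueness of $\nu^*$ inherited from that of $\bar\nu$. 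Total-variation convergence of $\text{Law}(\xi_t,S(t),I(t))$ then follows from that of $(Y_k)$ by a coupling built on the common exponential clocks, controlling the residual inter-jump time. I expect the genuinely delicate point to be step (i) together with the uniform part of step (ii): one must make sure the degeneracy locus $Q_0$ is met only on a measure-zero set of switching times along every trajectory — this is exactly where the failure of \eqref{e12a} and the fact that conics are not integral curves of \eqref{e5} are used — and that the minorizing open set $U'$ can be chosen uniformly on a neighbourhood of $(s^+_*,i^+_*)$ and reached from the whole of $K$ within a bounded number of switches; the existence half and the passage from Doeblin to total-variation convergence are routine.
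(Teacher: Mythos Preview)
Your argument is essentially correct and follows a genuinely different route from the paper. The paper's own proof is much shorter and more referential: it first derives the occupation-time bound
\[
\liminf_{t\to\infty}\frac1t\int_0^t\mathbb P\{I(\bar t)\ge\rho/2\}\,d\bar t\ge\frac{\rho}{2N}
\]
from the estimate $\liminf_{t\to\infty}\frac1t\int_0^t I(\bar t)\,d\bar t\ge\rho$ obtained in Proposition~\ref{prop5}, observes that $(\xi_t,S(t),I(t))$ is Feller, and then invokes a Krylov--Bogoliubov/Stettner--Meyn--Tweedie existence criterion directly on the continuous-time process. The absolute continuity of $\nu^*$ and the total-variation convergence are not proved in the paper at all but deferred to \cite{DD} and \cite{DDY}. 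Your approach, by contrast, is self-contained: you work with the embedded chain at odd jump times, produce the transition density explicitly via the area formula, build a Doeblin minorization out of Lemmas~\ref{lem7}, \ref{lem9} and the local diffeomorphism already exhibited in Theorem~\ref{thm2}(b), and then lift to continuous time. This buys you uniqueness, absolute continuity, and total-variation convergence in one stroke, without external references; the paper's route is quicker for bare existence but outsources the remaining claims.

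Two small points to tighten. First, your claim that $Q(y,\cdot)\ll\mathrm{Leb}$ for \emph{every} $y$ fails at the interior equilibrium $(s^-_*,i^-_*)$ of \eqref{e5} when it exists (there $\pi^-_{s_1}(y)\equiv y\in Q_0$ for all $s_1$, and $Q(y,\cdot)$ is supported on a curve); this is harmless---replace $Q$ by $Q^2$, or simply note the exceptional set is a single point---but should be said. Second, ``classical Doeblin'' is a slight overstatement: you have the minorization only on $K$, not on the whole state space, so what you are really invoking is a Harris-type argument (small set plus recurrence to it). Since the dynamics eventually live in the compact quadrilateral $ABCD$ (Proposition~\ref{prop3}) and your $Q^2$-minorization on $K$ combined with Lemma~\ref{lem9} gives positive Harris recurrence and aperiodicity, the conclusion stands, but the name should match the tool.
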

\section{DISCUSSION}

The basic reproduction number $\mathcal R_0$ is an important concept in epidemiology. $\mathcal R_0$ is the threshold parameter for many epidemiological models, it informs whether the disease becomes extinct or whether the disease is endemic. For example, there are many recent papers about periodic epidemic models that concentrate on defining and computing $\mathcal R_0$ (see \cite{BA}, \cite{BG}, \cite{Fr}, \cite {WA} \cite{ZY}). In the classic SIRS model \eqref{e1}, $\mathcal R_0$ is valued by ratio $\frac{Na}{b}$, it represents the rate of increase of new infections generated by a single infectious individual in a total sane population. Based on this $\mathcal R_0$, we give out the key parameter $\lambda$ for our stochastic SIRS model. $\lambda$ reads as follows:
$$\lambda = p\big(a(+)N - b(+)\big) + q\big(a(-)N - b(-)\big).$$
This is the average of two terms associated with each system $+$ or $-$ weighted by the switching intensities. Therefore, in the stochastic model, $\lambda$ can be interpreted as the average number of infective individuals generated by a single infectious individual in a totally sane population for the total system with random switches. We can, therefore, understand that when $\lambda$ is positive, it signifies that asymptotically the total system will go towards an endemic state while the disease will vanish provided that it is negative. Hence, for the stochastic model, $\lambda$ is a very important parameter that enables us to obtain important informations about the asymptotic behavior of the total system. {\color{black} It should also be mentioned that in \cite{GGMP}, when studying the simpler SIS model with Markov switching, the authors also provided a similar threshold value $T^S_0$ for almost sure extinction or persistence. In case of persistence, using the ergodicity of the Markov chain, some ultimate estimates for the solution were given. However, in order to study the SIRS model, we need to consider a system of two differential equations rather than only one differential equation as for the SIS model, so the method used in \cite{GGMP} may not appropriate for the SISR model with Markovian switching. Moreover, that method does not provide a complete description of the asymptotic behavior of the solution. For this reason, we treat our model in another way. To be more precisely, we analyzed the pathwise solution and  obtained a better result. Moreover, some nice properties of the stationary distribution and the convergence of the transition probability were given.}

{\color{black} In this work, we have shown that the asymptotic behavior of the system depends on the sign of a parameter $\lambda$.
We have shown that when $\lambda < 0$, the epidemics always vanishes and
that when $\lambda > 0$, the system is persistent leading to an endemic state.
Furthermore, under some supplementary constraint on parameters, the system is permanent, theorem \ref{thrm3.2}.
When $\lambda >0$, we also have shown properties of stationary distributions. We also illustrated our results by numerical simulations.}

We illustrate different situations in the following numerical simulations. Examples I and II show cases where $\lambda$ is positive, the first one illustrates the switching between two endemic systems $+$ and $-$, whilst the second one depicts a system composed of an endemic case $+$ and a case $-$ for which the disease free equilibrium is stable. In both examples, the simulations show that asymptotically the total system persists leading to an endemic situation.

The last example III considers the case of $\lambda < 0$, with an endemic system $+$ and for the other one $-$ a stable disease free equilibrium. As expected, the simulation shows that after several switches, the disease is globally eradicated.

Examples II and III are interesting because they illustrate a similar case, i.e. when systems $+$ and $-$ have opposite trends, system $+$ being endemic and system $-$ being disease free. In those examples, it is thus questionable to predict what will be the global evolution of the complete system switching at random between these two different situations. The answer is given by looking at the sign of parameter $\lambda$ which allow us to predict if the disease will globally invade or vanish in the long term.

Global changes may have important consequences on the spreading of emergent diseases and epidemics. Therefore, it is important to provide pertinent tools that allow us to make suitable predictions about the possibility of emergence of a disease in a changing environment undergoing climatic and environmental changes. The aim of this paper was to provide such efficient tools.

As a perspective, the system would be extended to the case of a system switching randomly between $n$ states, $n>2$. It would also be interesting to test the model on real situations, like malaria, switching between wet and dry periods. Otherwise, for further study on the epidemic models under the effect of random environmental conditions, we could add some more other stochastic factors as in \cite{BDW} to this SIRS model. {\color{black} An interesting perspective would also be to combine in the same model, periodic seasonal \cite{BA},\cite{BG} and random changes of weather conditions.}
\section{Appendix}
\begin{proof}[The proof of Theorem \ref{thm1}] 
With $K$ is mentioned in Lemma \ref{lem9}, we construct a sequence
\begin{eqnarray*}
\eta_1&=&\inf\{2k+1: (S_{2k+1}, I_{2k+1})\in K\}\\
\eta_2&=&\inf\{2k+1>\eta_1: (S_{2k+1}, I_{2k+1})\in K\}\\
\cdots&&\\
\eta_n&=&\inf\{2k+1>\eta_{n-1}: (S_{2k+1}, I_{2k+1})\in K\}\ldots
\end{eqnarray*}
It is easy to see that $\{\eta_k=n\}\in\mathscr{F}_0^n$ for any $k, n$. Thus the event $\{\eta_k=n\}$ is independent of $\mathscr{F}_n^{\infty}$ if $\xi_0$ is given. By Lemma \ref{lem9}, $\eta_n<\infty$ a.s. for all $n$.

Let $T_2>0, \overline T_2>0$. For any $k\in \mathbb N$, put  $A_k=\{\sigma_{\eta_k+1}<T_2, \sigma_{\eta_k+2} > \overline T_2\}$. 
We have
\begin{align*}
\mathbb P({A_k}) &=\mathbb P\{\sigma_{\eta_k+1}< T_2, \sigma_{\eta_k+2}> \overline T_2\}\\
&=\sum_{n=0}^{\infty }\mathbb P\{\sigma_{\eta_k+1}< T_2, \sigma_{\eta_k+2}> \overline T_2\mid \eta _k=2n+1\}\mathbb P\{\eta _k=2n+1\}\\
&=\sum_{n=0}^{\infty }\mathbb P\{\sigma_{2n+2}< T_2, \sigma_{2n+3}> \overline T_2\mid \eta _k=2n+1\}\mathbb P\{\eta _k=2n+1\}
\\
&=\sum_{n=0}^{\infty }\mathbb P\{\sigma_{2n+2}< T_2, \sigma_{2n+3}> \overline T_2\}\mathbb P\{\eta _k=2n+1\}\\
&=\sum_{n=0}^{\infty }\mathbb P\{\sigma _{2}< T_2, \sigma_3> \overline T_2\}\mathbb P\{\eta _k=2n+1\}=\mathbb P\{\sigma _{2}< T_2, \sigma_3> \overline T_2\}>0.
\end{align*}
Similarly,
\begin{align*}
& \mathbb P({A_k}\cap {A_{k+1}})=\mathbb P\{\sigma_{\eta_k+1}< T_2, \sigma_{\eta_k+2}> \overline T_2, \sigma_{\eta_{k+1}+1}< T_2, \sigma_{\eta_{k+1}+2}> \overline T_2\}\\
&=\sum_{0\leq l<n<\infty }\mathbb P\{\sigma_{\eta_k+1}< T_2, \sigma_{\eta_k+2}> \overline T_2, \sigma_{\eta_{k+1}+1}< T_2, \sigma_{\eta_{k+1}+2}> \overline T_2\mid\\
& \qquad\qquad\eta _k=2l+1,\eta _{k+1}=2n+1 \}\mathbb P\{\eta _k=2l+1,\eta _{k+1}=2n+1\}
\\
&=\sum_{0\leq l<n<\infty }\mathbb P\{\sigma_{2l+2}< T_2, \sigma_{2l+3}> \overline T_2, \sigma_{2n+2}<T_2, \sigma_{2n+3}>\overline T_2\mid \eta _k=2l+1,\\ 
&\qquad\qquad\eta _{k+1}=2n+1 \}\times\mathbb P\{\eta _k=2l+1,\eta _{k+1}=2n+1\} \\
&=\sum_{0\leq l<n<\infty }\mathbb P\{\sigma_{2n+2}< T_2, \sigma_{2n+3}> \overline T_2\}\mathbb P\{\sigma_{2l+2}<T_2, \sigma_{2l+3}>\overline T_2\mid\\
&\qquad\qquad\eta_k=2l+1, \eta _{k+1}=2n+1 \}\times \mathbb P\{\eta _k=2l+1,\eta _{k+1}=2n+1\} 
\\
&=\sum_{0\leq l<n<\infty }\mathbb P\{\sigma_{2}< T_2, \sigma_{3}> \overline T_2\}\mathbb P\{\sigma_{2l+2}< T_2, \sigma_{2l+3}> \overline T_2\mid\\
&\qquad\qquad\eta_k=2l+1, \eta _{k+1}=2n+1 \}\times \mathbb P\{\eta =2l+1,\eta _{k+1}=2n+1\} 
\\
&=\mathbb P\{\sigma_{2}< T_2, \sigma_{3}> \overline T_2\}\sum_{0\leq l<n<\infty }\mathbb P\{\sigma_{2l+2}< T_2, \sigma_{2l+3}> \overline T_2\mid\\
&\qquad\qquad\eta_k=2l+1, \eta _{k+1}=2n+1 \}\times \mathbb P\{\eta _k=2l+1,\eta _{k+1}=2n+1\}
\\ %\end{align*}\begin{align*}
&=\mathbb P\{\sigma_{2}< T_2, \sigma_{3}> \overline T_2\}\sum_{l=0}^{\infty }\mathbb P\{\sigma_{2l+2}< T_2, \sigma_{2l+3}> \overline T_2\mid \eta _k=2l+1\}\mathbb P\{\eta _k=2l+1\}\\
&=\mathbb P\{\sigma _{2}< T_2, \sigma_3> \overline T_2\}^2.
\end{align*}
Therefore, $$\mathbb P(A_k\cup A_{k+1})=1-(1-\mathbb P\{\sigma_2<T_2, \sigma_3>\overline T_2\})^2.$$
Continuing this way we obtain
$$\mathbb P\biggl(\bigcup_{i=k}^nA_i\biggl)=1-(1-\mathbb P\{\sigma_2<T_2, \sigma_3>\overline T_2\})^{n-k+1}.$$
Hence, 
\begin{equation}\label{e3.14}
\mathbb P\biggl(\bigcap_{k=1}^{\infty}\bigcup_{i=k}^{\infty}A_i\biggl)={\mathbb P}\{\omega: \sigma_{\eta_n+1}<T_2, \sigma_{\eta_n+2}>\overline T_2 \mbox{ i.o. of }\;n\}=1.
\end{equation}
%\vskip -.7cm
Fix $T_2>0$. From $\dot I(t)=a(\xi_t)S(t)I(t)-b(\xi_t)I(t)\geq -b_{\max}I(t)$ and $I(\tau_{\eta_k})\geq I_{\min}$, it follows that $I(t+\tau_{\eta_k})\geq I_{\min} e^{-b_{\max}t}$ for all $t>0$. As a result, with $\sigma_{\eta_k+1}<T_2$, $ I_{\eta_k+1}>\Delta:=I_{\min} e^{-b_{\max}T_2}$.

Let $\delta_2>0$, we choose $\bar T_2=T_1(\delta_2)$ as in Lemma \ref{lem7} for the set  $J= H_{\Delta}$. Because $I_{\eta_k}\geq \delta_1$, it follows $I_{\eta_k+1}\in H_{\Delta}$ and $(S_{\eta_k+2}, I_{\eta_k+2})\in \mathcal U_{\delta_2}(s_*^+,i_*^+)$ provided $\sigma_{\eta_k+1}<T_2, \sigma_{\eta_k+2}>\bar T_2$. From \eqref{e3.14}we see that $(S_{\eta_k+2}, I_{\eta_k+2})\in \mathcal U_{\delta_2}(s_*^+,i_*^+)$ for infinitely many $k$. This means that $(s_*^+,i_*^+) \in \Omega(s_0, i_0, \omega)$ for almost all $\omega$.

Next, we show that $\{\pi_t^-(s_*^+,i_*^+):t \geq 0 \} \subset \Omega(s_0, i_0,\omega)$ a.s. Consider a point $(\bar s, \bar i) = \pi_{T_3}^-(s_*^+,i_*^+)$. By the continuous dependence of solutions on the initial values, for any ${\delta_4}>0$, there are $ \delta_3, \overline T_3$ such that if $(u,v)\in \mathcal U_{\delta_3}(s_*^+,i_*^+)$ then $\pi_t^-(u,v)  \in \mathcal U_{\delta_4}(\bar s,\bar i)$ for all $T_3-\overline T_3<t<T_3+\overline T_3.$ We now construct the sequence of stopping times 
\begin{eqnarray*}
\zeta_1&=&\inf\{2k+1: (S_{2k+1}, I_{2k+1})\in \mathcal U_{\delta_3}(s^+_*, i^+_*)\},\\
\zeta_2&=&\inf\{2k+1>\zeta_1: (S_{2k+1}, I_{2k+1})\in \mathcal U_{\delta_3}(s^+_*, i^+_*)\},\\
\cdots&&\\
\zeta_n&=&\inf\{2k+1>\zeta_{n-1}: (S_{2k+1}, I_{2k+1})\in \mathcal U_{\delta_3}(s^+_*, i^+_*)\}\ldots
\end{eqnarray*}

For $(s_*^+,i_*^+) \in \Omega(s_0, i_0, \omega)$, it follows that $\zeta_n<\infty$  and $\lim\limits_{n\to\infty}\zeta_n=\infty$ a.s. Since $\{\zeta_k=n\}\in\mathscr{F}_0^n$, $\{\zeta_k\}$ is independent of $\mathscr{F}_n^{\infty}.$ Put 
\begin{equation*}
B_k=\{\sigma _{\zeta _k+1}\in [T_3-\overline T_3,T_3+\overline T_3]\},\quad k=1,2,...
\end{equation*}
By the same argument as above we obtain ${\mathbb P}\{\omega: \sigma_{\zeta_n+1}\in[T_3-\overline T_3,T_3+\overline T_3] \mbox{ i.o. of \;}n\}=1.$  This implies $(S_{\zeta_k+1},I_{\zeta_k+1}) \in \mathcal U_{\delta_4}(\bar s, \bar i)$ for infinitely many times and $(\bar s, \bar i) \in \Omega(s_0, i_0,\omega) $ a.s. Thus, $\{\pi_t^-(s_*^+,i_*^+):t \geq 0 \} \subset \Omega(s_0, i_0,\omega)$. 

Based on the continuous dependence of solutions on the initial values and using a similar argument, we see that $\{\pi_{t_2}^+\circ\pi_{t_1}^-(s_*^+,i_*^+):t_1\geq 0, t_2 \geq 0 \} \subset \Omega(s_0, i_0,\omega)$. By induction, we conclude $\Gamma \subset \Omega(s_0, i_0,\omega)$. Moreover, $\overline \Gamma \subset \Omega(s_0, i_0,\omega)$ since $\Omega(s_0, i_0,\omega)$ is a closed set.
\end{proof}
\begin{proof}[The proof of Theorem \ref{thm3}]
We firstly point out the existence of a stationary distribution of the process $(\xi_t, S(t), I(t))$. 
From the proof of Proposition \ref{prop5}, we have $$\liminf_{t\to\infty}\frac 1t \int_0^t I(\bar t)d\bar t \geq \frac {c_{\min}}{(a_{\max}N+c_{\max})a_{\max}}\lambda=:\rho >0. $$
Denote by ${\bf1}_{A}$  the indicator function of the set $A$. By using  the relations
\begin{align*}
\dfrac{1}t\int^t_0I(\bar t)d\bar t=&\dfrac{1}t\int^t_0I(\bar t){\bf1}_{\{I(\bar t)<\frac{\rho}2\}}d\bar t+\dfrac{1}t\int^t_0I(\bar t){\bf1}_{\{I(\bar t)\geq\frac{\rho}2\}}d\bar t\\
\leq& \frac{\rho}2+\frac N t \int^t_0{\bf1}_{\{I(\bar t)\geq\frac{\rho}2\}}d\bar t,
\end{align*}
it follows, with probability 1,  that
$$
\liminf\limits_{t\to\infty}\dfrac{1}t\int_0^t{\bf1}_{\{I(\bar t)\geq\frac{\rho}2\}}d\bar t\geq  \dfrac{\rho}{2N}.
$$
Applying Fatou lemma yields 
\begin{equation}\label{bs}
\liminf\limits_{t\to\infty}\dfrac{1}t\int_0^t\mathbb P\big\{I(\bar t)\geq\frac{\rho}2\big\}d\bar t\geq  \frac{\rho}{2N}.
\end{equation} 
Consider the process $(\xi_t, S(t), I(t))$ on a larger state space $E\times \big(\nabla\setminus \{(s,i):s=0, 0\leq i\leq N\}\big)$.
it is easy to prove that $(\xi_t, S(t), I(t))$ is a Feller process. Therefore, 
by using  \cite[Theorem 4.5]{MT} (or \cite {SL}) 
the above estimate \eqref{bs} implies the existence of an invariant probability measure $\nu$ for the process $(\xi_t, S(t), I(t))$ on 
$E\times \big(\nabla\setminus \{(s,i):s=0, 0\leq i\leq N\}\big)$.
Since $\{(s,i): i=0, 0\leq s\leq N\}$ is invariant and $\lim_{t\to\infty}I(t)=0$ if $S(0)=0$, it follows that $\nu(\{(s,i):i=0, 0\leq s\leq N\})=0.$  Thus, $\nu(E\times(\nabla\cap\mbox{int}\mathbb R^2_+))>0$. By virtue of the invariant property of $E\times$int$\mathbb R^2_+$, the measure $\nu^*$  defined by $\nu^*(A)=\dfrac{\nu\big(A\cap E\times(\nabla\cap\mbox{int}\mathbb R^2_+)\big)}{\nu(E\times(\nabla\cap\mbox{int}\mathbb R^2_+)}$ for any measurable $A\in \mathcal{B}(\mathscr{V})$ is a stationary distribution on $E\times(\nabla\cap\mbox{int}\mathbb R^2_+)$ of the process $(\xi_t, S(t), I(t)).$
The absolute continuity of $\nu^*$ is proved as in the proof of \cite[Proposition 3.1]{DD} while the convergence in total variation of the distribution of $(\xi_t, S(t), I(t))$ can be referred to  \cite[Theorem 4.2]{DDY}.
\end{proof}

\medskip 
\noindent{\bf Acknowledgements} 

The Authors  would like to thank the reviewers for their very valuable remarks and comments, which will certainly improve the presentation of the paper.

\end{document}